\documentclass[12pt]{article}

\usepackage{graphicx,hyperref}

\usepackage{graphicx}
\usepackage{multirow}   
\usepackage{bm}         
\usepackage{amsmath}    
\usepackage{amsfonts}   
\usepackage{amssymb}    
\usepackage{calligra}   
\usepackage{mathrsfs}   
\usepackage{amsthm}     
\usepackage{natbib}
\usepackage{url}        
\usepackage{graphicx}
\usepackage{multirow}
\usepackage{subfigure}
\usepackage{enumerate}
\usepackage{rotating}

\usepackage{mathtools}

\numberwithin{equation}{section}
\theoremstyle{plain}
\newtheorem{theorem}{Theorem}[section]
\newtheorem{lemma}[theorem]{Lemma}


\newcommand{\be}{\begin{equation}}
\newcommand{\ee}{\end{equation}}
\newcommand{\beqa}{\begin{eqnarray*}}
\newcommand{\eeqa}{\end{eqnarray*}}
\newcommand{\beqn}{\begin{eqnarray}}
\newcommand{\eeqn}{\end{eqnarray}}
\newcommand{\ba}{\begin{array}}
\newcommand{\ea}{\end{array}}
\newcommand{\bc}{\begin{center}}
\newcommand{\ec}{\end{center}}
\newcommand{\btab}{\begin{tabular}}
\newcommand{\etab}{\end{tabular}}


\newcommand{\mb}{\makebox}
\newcommand{\lt}{\left}
\newcommand{\rt}{\right}

\newcommand{\ld}{\ldots}
\newcommand{\cd}{\cdot}
\newcommand{\st}{\stackrel}

\newcommand{\iy}{\infty}



\newcommand{\mc}[1]{\mathcal{#1}}


\newcommand{\Ind}{1\!\mathrm{l}}

\newcommand{\ind}{\, \raise-2pt\hbox{$\st{\mb{\scriptsize ind}}{\sim}$}\, }
\newcommand{\iid}{\, \raise-2pt\hbox{$\st{\mb{\scriptsize iid}}{\sim}$}\,}

\renewcommand{\P}{\mathrm{P}}

\newcommand{\E}{\mathrm{E}}

\newcommand{\tr}{\mathrm{tr}}







\newcommand{\bX}{\bm{X}}
\newcommand{\bx}{\bm{x}}

\newcommand{\bGamma}{\bm{\Gamma}}
\newcommand{\bDelta}{\bm{\Delta}}
\newcommand{\bSigma}{\bm{\Sigma}}
\newcommand{\bOmega}{\bm{\Omega}}

\newcommand{\bmA}{\bm{A}}

\newcommand{\bmI}{\bm{I}}



\newcommand{\RR}{{\mathbb R}}



\mathchardef\given="626A
\mathcode`:="603A

\long\def\beginskip#1\endskip{}
\def\endskip{}



\newcommand{\eig}{\mathrm{eig}}

\title{Bayesian estimation of a sparse precision matrix}

\author{Sayantan Banerjee\footnote{Corresponding author at: Department of Statistics, North Carolina State University, 5219 SAS Hall, 2311 Stinson Drive, Raleigh, NC 27695-8203. Tel: +1-919-699-8773. e-mail: sayantan.banerjee.isi@gmail.com}, Subhashis Ghosal\\ {\it North Carolina State University}}
\date{}

\begin{document}
\maketitle


\begin{abstract}
  We consider the problem of estimating a sparse precision matrix of a multivariate Gaussian distribution, including the case where the dimension $p$ is large. Gaussian graphical models provide an important tool in describing conditional independence through presence or absence of the edges in the underlying graph. A popular non-Bayesian method of estimating a graphical structure is given by the graphical lasso. In this paper, we consider a Bayesian approach to the problem. We use priors which put a mixture of a point mass at zero and certain absolutely continuous distribution on off-diagonal elements of the precision matrix. Hence the resulting posterior distribution can be used for graphical structure learning. The posterior convergence rate of the precision matrix is obtained. The posterior distribution on the model space is extremely cumbersome to compute. We propose a fast computational method for approximating the posterior probabilities of various graphs using the Laplace approximation approach by expanding the posterior density around the posterior mode, which is the graphical lasso by our choice of the prior distribution. We also provide estimates of the accuracy in the approximation.
\end{abstract}
Keywords : Graphical Lasso; Graphical models; Laplace approximation; Posterior convergence; Precision matrix.


\section{Introduction}
Statistical inference on large covariance or precision matrix (inverse of covariance matrix) is a topic of growing interest in recent times. Often the dimension $p$ grows with the sample size $n$ and even $p$ can be bigger than $n$. Data of this type are frequently encountered in fMRI, spectroscopy, gene array expressions and so on. Estimation of the covariance or precision matrix is of special interest because of their importance in methods like principal component analysis (PCA), linear discriminant analysis (LDA), etc. In cases where $p > n$, the sample covariance matrix is necessarily singular, and hence an estimator of the precision matrix cannot be obtained by inverting it. Therefore we need to resort to other techniques for handling the high-dimensional problems.

\par Regularization methods for estimation of the sample covariance or precision matrix have been proposed and studied in recent literature for high-dimensional problems. These include banding, thresholding, tapering and penalization based methods; for example, see \cite{ledoit2004well, huang2006covariance, yuan2007model, bickel2008covariance, bickel2008regularized, karoui2008operator, friedman2008sparse, rothman2008sparse, lam2009sparsistency, rothman2009generalized, cai2010optimal, cai2011constrained}. The primary goal of these regularization based methods is to impose a sparsity structure in the matrix. Most of these methods are applicable to situations where there is a natural ordering in the underlying variables, for example in data from time series, spatial data, etc., so that variables which are far off from each other have smaller correlations or partial correlations. In high-dimensional situations for data arising from genetics or econometrics, a natural ordering of the underlying variables may not always be readily available and hence estimation methods which are invariant to the ordering of the variables are desirable.

\par For estimation of a sparse inverse covariance matrix, graphical models \citep{lauritzen1996graphical} provide an excellent tool, as the conditional dependence between the component variables is captured an undirected graph; see \cite{dobra2004sparse,meinshausen2006high,yuan2007model,friedman2008sparse}. There are several methods in the frequentist literature for the estimation of the precision matrix through graphical models. These methods include minimization of the penalized log-likelihood of the data with a lasso type penalty on the elements of the precision matrix. Several algorithms have been developed in the literature to solve the above optimization problem, including coordinate descent based algorithm for the lasso, which is popularly known as the graphical lasso \citep{meinshausen2006high, friedman2008sparse, banerjee2008model, yuan2007model, guo2011joint, witten2011new}. Other methods include the Sparse Permutation Invariant Covariance Estimator (SPICE) \citep{rothman2008sparse}.

\par Frequentist behavior of Bayesian methods in the context of high dimensional covariance matrix estimation have been studied only by a few authors. \cite{ghosal2000asymptotic} studied asymptotic normality of posterior distributions for exponential families, which include the normal model with unknown covariance matrix, when the dimension $p \rightarrow \infty$, but restricting to $p \ll n$. Recently, \cite{pati2012factor} considered sparse Bayesian factor models for dimensionality reduction in high dimensional problems and showed consistency in the $L_2$-operator norm (also known as the spectral norm) by using a point mass mixture prior on the factor loadings, assuming such a factor model representation of the true covariance matrix.

\par Bayesian methods for inference using graphical models have also been developed, as in \cite{roverato2000cholesky, atay2005monte, letac2007wishart}. A conjugate family of priors, known as the $G$-Wishart prior \citep{roverato2000cholesky} have been developed for incomplete decomposable graphs. The equivalent prior on the covariance matrix is termed as the hyper inverse Wishart distribution in \cite{dawid1993hyper}. \cite{letac2007wishart} introduced a more general family of conjugate priors for the precision matrix, known as the $W_{P_G}$-Wishart family of distributions, which also has the conjugacy property. The properties of this family of distributions, including expressions for the Bayes estimators were further explored in \cite{rajaratnam2008flexible}. Recently \cite{banerjeeghoshal2013} studied posterior convergence rates for a $G$-Wishart prior inducing a banding structure, where the true precision matrix need not have the banding structure.

\par \cite{wang2012bayesian} developed a Bayesian version of the graphical lasso, putting Laplace priors on the off-diagonal elements of the precision matrix and exponential priors on the diagonals. Similar in lines with the Bayesian lasso \citep{park2008bayesian}, the posterior mode in this case coincides with the graphical lasso estimate. A block Gibbs sampler is also developed for sampling from the resulting posterior. However, the Bayesian graphical lasso does not introduce any sparsity in the graphical structure because of the absence of a point mass at zero in the prior distribution for the off-diagonal elements. On the other hand, if point masses are introduced, the resulting posterior distribution on the structure of the graph becomes extremely difficult to compute based on the traditional reversible jump Markov chain Monte Carlo method.

\par In this paper, we derive posterior convergence rates for the Bayesian graphical lasso prior in terms of the Frobenius norm under appropriate sparsity conditions. For computing the posterior distribution, we propose a Laplace approximation based method to compute the posterior probability of different graphical structures. Such Laplace approximations based methods have been developed for variable selection in regression models; for example, see \cite{yuan2005efficient, mckay2013fast}. The lasso type penalty on the elements lead to non-differentiability of the integrand, when the graphical lasso sets an off-diagonal entry to zero, but the model includes that off-diagonal entry as a free variable. We shall call such models non-regular following the terminology used by \cite{yuan2005efficient} for variable selection in linear regression models. We show that the posterior probability of non-regular models are substantially smaller than their regular counterparts and hence in comparison may be ignored from consideration. We also estimate the error in the Laplace approximation for regular models. 

\par The paper is organized as follows. In the next section, we introduce notations and discuss preliminaries on graphical models required for the other sections of the paper. In Section 3, we state model assumptions and specify the prior distribution on the underlying parameters, derive the form of the posterior and obtain the posterior convergence rate using the general theory developed in \cite{ghosal2000convergence}. In Section 4, we develop the approximation of the posterior probabilities for different graphical models and discuss the issue of non-regular graphical models. We also show that the error in approximation of the posterior probabilities using the Laplace approximation is asymptotically negligible under appropriate conditions. A simulation study is performed in the Section 5 followed by a real data example in Section 6. Proofs of main results and additional lemmas are included in the Appendix.

\section{Notations and preliminaries}

An undirected graph $G$ comprises of a non-empty set of $p$ vertices $V$ indexing the components of a $p$-dimensional random vector along with an edge-set $E$ defined by $E \subset \{(i,j)\in V \times V:\, i<j\}$. Let $\bX = (X_1,\ldots, X_p)^T$ be $\mathrm{N}_p(\bm{0},\bOmega^{-1})$, where the precision matrix $\bOmega = (\!(\omega_{ij})\!)$ is such that $(i,j) \not \in E$ implies $\omega_{ij} = 0$. We then say that $\bX$ follows a Gaussian graphical model (GGM) with respect to the graph $G$. Since the absence of an edge between $i$ and $j$ implies conditional independence of $X_i$ and $X_j$ given $(X_r : r \neq i,j)$, a GGM serve as an excellent tool in representing the sparsity structure in the precision matrix. Following the notation in \cite{letac2007wishart}, the canonical parameter $\bOmega$ is restricted to $\mathcal{P}_G$, where $\mathcal{P}_G$ is the cone of positive definite symmetric matrices of order $p$ having zero entry corresponding to each missing edge in $E$. We also denote the linear space of symmetric matrices of order $p$ by $\mathcal{M}$, and $\mathcal{M}^+ \subset \mc{M}$ to be the cone of positive definite matrices of order $p$. Corresponding to each GGM $G = (V,E)$, we define the set $\mathcal{V} =  \{(i,j)\in V \times V:\, i= j,  \mathrm{or}\, (i,j) \in E\}$.
\par By $t_n = O(\delta_n)$ (respectively, $o(\delta_n)$), we  mean that $t_n/\delta_n$ is bounded (respectively, $t_n/\delta_n \rightarrow 0$ as $n\to\iy$). For a random sequence $X_n$, $X_n = O_P(\delta_n)$ (respectively, $X_n = o_P(\delta_n)$) means that $\P(|X_n| \leq M\delta_n) \rightarrow 1$ for some constant $M$ (respectively, $\P(|X_n| < \epsilon\delta_n) \rightarrow 1$ for all $\epsilon > 0$). For numerical sequences $r_n$ and $s_n$, by $ r_n \ll s_n$ (or, $r_n \gg s_n)$ we mean that $r_n = o(s_n)$, while by $s_n \gtrsim r_n$ we mean that $r_n = O(s_n)$. By $r_n\sim s_n$ we mean that $r_n/s_n\to 1$. The indicator function is denoted by $\Ind$.
Vectors are represented in bold lowercase English or Greek letters with the components of a vector by the corresponding non-bold letters, that is, for $\bx \in \RR^p$, $\bx = (x_1,\ldots,x_p)^T$. For a vector $\bx \in \RR^p$, we define the following vector norms:
$\|\bx\|_r = \left(\sum_{j=1}^p|x_{j}|^r\right)^{1/r}$, $\|\bx\|_\infty = \mathop{\max }_{j}|x_{j}|$. 
Matrices are denoted in bold uppercase English or Greek letters, like $\bmA = (\!(a_{ij})\!)$, where $a_{ij}$ stands for the $(i,j)$th entry of $\bm{A}$. The identity matrix of order $p$ will be denoted by $\bm{I}_p$.
If $\bm{A}$ is a symmetric $p\times p$ matrix, let $\eig_1(\bmA) \leq \ld \leq \eig_p(\bm{A})$ stand for its eigenvalues and let the trace of $\bm{A}$ be denoted by $\tr(\bm{A})$.
Viewing $\bm{A}$ as a vector in $\RR^{p^2}$, we define $L_r,\, 1 \leq r < \infty$ and $L_{\infty}$-norms on $p\times p$ matrices as
\begin{equation*}
\|\bmA\|_r =\lt(\sum_{i=1}^p\sum_{j=1}^p |a_{ij}|^r\rt)^{1/r}, \; 1\le r<\iy, \quad \|\bmA\|_\infty = \mathop{\max }_{i,j}|a_{ij}|.
\end{equation*}
Note that $\|\bmA\|_2 = \sqrt{\tr(\bmA^T\bmA)}$, the Frobenius norm. Viewing $\bm{A}$ an operator from $(\RR^p,\|\cd\|_r)$ to $(\RR^p, \|\cd\|_s)$, where $1\le r,s\le\iy$, we can also define, $\|\bmA\|_{(r,s)} = \mbox{sup}(\|\bmA\bx\|_s:\|\bx\|_r = 1).$ We refer to the norm $\|\cd\|_{(r,r)}$ as the $L_r$-operator norm.
This gives the $L_2$-operator norm of $\bmA$ as
\begin{equation*}
\|\bmA\|_{(2,2)} = [\max\{\eig_i(\bmA^T\bmA):1\le i\le p\}]^{1/2}.
\end{equation*}
For symmetric matrices, $\|\bmA\|_{(2,2)}= \max\{|\eig_i(\bmA)|:1\le i\le p\}$.  For symmetric matrices $\bmA$ and $\bm{B}$ of order $p$, we have the following:
\begin{equation}
\label{eqn:matrixnorm}
\begin{split}
&\|\bmA\|_\infty \leq \|\bmA\|_{(2,2)} \leq \|\bmA\|_2 \leq p\|\bmA\|_{\infty}, \\
&\|\bmA\bm{B}\|_2 \leq \|\bmA\|_{(2,2)}\|\bm{B}\|_2,\, \|\bmA\bm{B}\|_2 \leq \|\bmA\|_2\|\bm{B}\|_{(2,2)}.
\end{split}
\end{equation}
$\bm{A}^{1/2}$ stands for the unique positive definite square root of a positive definite matrix $\bm{A}$. For two matrices $\bm{A}$ and $\bm{B}$, we say that $\bm{A}\ge \bm{B}$ (respectively, $\bm{A}> \bm{B}$) if $\bm{A}-\bm{B}$ is nonnegative definite (respectively, positive definite). Thus $\bmA > \bm{0}$ for a positive definite matrix $\bmA$, where $ \bm{0}$ stands for the zero matrix. We denote sets in non-bold uppercase English letters. The cardinality of a set $T$, that is, the number of elements in $T$ is denoted by $\#T$. We define the symmetric matrix $\bm{E}_{(i,j)} = (\!(\Ind_{\{(i,j),(j,i)\}}(l,m))\!)$.


The Hellinger distance between two probability densities $q_1$ and $q_2$ is given by $h(q_1,q_2) = \|\sqrt{q_1}-\sqrt{q_2}\|_2$.


For a subset $A$ of a metric space $(S,d)$, $N(\epsilon,A,d)$ denote the $\epsilon$-covering number of $A$ with respect to $d$, that is, the minimum number of $d$-balls of size $\epsilon$ is $S$ needed to cover $A$.

\section{Model, prior and posterior concentration}
Consider $n$ independent random samples $\bX_1,\ldots,\bX_n$ from $\mathrm{N}_p(\bm{0},\bSigma)$, where $\bSigma$ is nonsingular and the precision matrix $\bOmega = \bSigma^{-1}$ is sparse. The problem is to estimate $\bOmega$ and to learn the underlying graphical structure. We denote the natural unbiased estimator of $\bSigma$ by $\widehat{\bSigma} = n^{-1}\sum_{i=1}^{n}\bX_i\bX_i^T$ .

\par The graphical lasso produces sparse solutions for the precision matrix, in similar lines to that of the lasso in case of linear regression. The graphical lasso estimator minimizes two times the penalized negative log-likelihood 
\begin{equation}
-\log \det(\bOmega) + \mathrm{tr}(\widehat{\bSigma}\bOmega) + \rho\|\bOmega\|_1,
\end{equation}
over the class of positive definite matrices, and $\rho \geq 0$ acts as the penalty parameter. \cite{rothman2008sparse} derived frequentist convergence rates of the penalized estimator under some sparsity assumptions on the true precision matrix. More specifically, consider the following class of positive definite matrices of order $p$:
\begin{equation}
\label{matrixclass}
\begin{split}
\mathcal{U}(\varepsilon_0,s) &= \left\{\bOmega: \#\{(i,j): \omega_{ij} \neq 0, 1 \leq i < j \leq p\} \leq s, \right. \\
&\qquad  \left. 0< \varepsilon_0 \leq \eig_1(\bOmega) \leq \eig_p(\bOmega) \leq \varepsilon_0^{-1} < \infty \right\} .
\end{split}
\end{equation}
Though \cite{rothman2008sparse} considered penalizing only the off-diagonal elements of $\bOmega$, some modification of the proof of their result leads to the same convergence rate for the graphical lasso estimator, obtained by additionally penalizing the diagonal elements. Let us denote $\bOmega^*$ as the graphical lasso estimator based on a sample of size $n$ from a $p$-dimensional Gaussian distribution with precision matrix $\bOmega_0 \in \mathcal{U}(\varepsilon_0,s)$, where $\mathcal{U}(\varepsilon_0,s)$ is given by (\ref{matrixclass}). Then, it follows from Theorem 1 in \cite{rothman2008sparse} that the rate of convergence of $\bOmega^*$ is $n^{-1/2}(p+s)^{1/2}\log p$. By the triangle inequality,
\begin{equation*}
\|\bOmega^*\|_{(2,2)} \leq \|\bOmega_0\|_{(2,2)} + \|\bOmega^* - \bOmega_0\|_{(2,2)}.
\end{equation*}
Also, the triangle inequality and sub-multiplicative property for matrix operator norms gives,
\begin{eqnarray}
\|\bOmega^{*-1}\|_{(2,2)} &\leq & \|\bOmega_0^{-1}\|_{(2,2)} + \|\bOmega^{*-1} - \bOmega_0^{-1}\|_{(2,2)} \nonumber \\
&\leq & \|\bOmega_0^{-1}\|_{(2,2)} + \|\bOmega_0^{-1}\|_{(2,2)}\|\bOmega^* - \bOmega_0\|_{(2,2)}\|\bOmega^{*-1}\|_{(2,2)}. \nonumber
\end{eqnarray}
Thus, we get,
\begin{equation*}
\|\bOmega^{*-1}\|_{(2,2)} \leq \frac{\|\bOmega_0^{-1}\|_{(2,2)}}{1 - \|\bOmega_0^{-1}\|_{(2,2)}\|\bOmega^* - \bOmega_0\|_{(2,2)}}.
\end{equation*}
Now, we have, $\|\bOmega_0\|_{(2,2)} \leq \varepsilon_0^{-1}$ by assumption, and it follows from Theorem 1 in \cite{rothman2008sparse} that $\|\bOmega^* - \bOmega_0\|_{2} = o_P(1)$ as $n \rightarrow \infty$. Noting that $\|\bOmega^* - \bOmega_0\|_{(2,2)} \leq \|\bOmega^* - \bOmega_0\|_{2}$, we get,
\begin{equation}
\label{eqn:glassoeigbound}
\|\bOmega^*\|_{(2,2)} = O_P(1),\, \|\bOmega^{*-1}\|_{(2,2)} = O_P(1).
\end{equation}

In the Bayesian context, \cite{wang2012bayesian} introduced the graphical lasso prior, which uses exponential distributions on diagonal elements and Laplace density $\lambda e^{-\lambda|x|}/2$ on off-diagonal elements, all independently of each other, and finally imposes a positive definiteness constraint. The graphical lasso prior has a drawback that it puts absolutely continuous priors on the elements of the precision matrix, and hence the posterior probabilities of the event $\{\omega_{ij} = 0\}$ is always exactly zero.

\par \cite{wang2012bayesian} also mentioned an extension of the graphical lasso by putting an additional level of prior on the underlying graphical model structure using point mass priors on the events corresponding to the absence of an edge in the edge-set $E$, although did not develop the method. We put point-mass prior on the events $\{\omega_{ij} = 0\}$ to make posterior inference about the sparse structure of the underlying graphical model. Define $\bGamma = (\gamma_{ij}: 1 \leq i < j \leq p)$ to be a $\binom{p}{2}$ vector of edge-inclusion indicator, that is,
\begin{equation}
\gamma_{ij} = \Ind\{(i,j) \in E\}, \, 1 \leq i < j \leq p.
\end{equation}
Similar to the Bayesian graphical lasso prior, given the underlying graphical structure, we put a Laplace prior on the non-zero off-diagonal elements of the precision matrix and for the diagonal elements we have a exponential prior, overall maintaining the positive definiteness of the parameter. Then the joint prior density on $\bOmega$ is given by,
\begin{equation}
\label{prior:omega}
p(\bOmega|\bGamma) \propto \prod_{\gamma_{ij} = 1}\left\{\exp(-\lambda|\omega_{ij}|)\right\}\prod_{i=1}^{p}\left\{\exp\left(-\lambda\omega_{ii}/2\right)\right\} \Ind_{\mathcal{M}^+}(\bOmega).
\end{equation}
We propose two different priors on the graphical structure indicator $\bGamma$. The edge indicators $\gamma_{ij}, 1 \leq i < j \leq p$ are considered to be independent and identically distributed (i.i.d) Bernoulli$(q)$ random variables, and conditioned to the restriction that the model size $\sum_{1 \leq i < j \leq p}\gamma_{ij}$ does not exceed $\bar{R}$. For some $a_1, a_2 >0$, the prior distribution on $\bar{R}$ is assumed to satisfy
\begin{equation}
\P(\bar{R} > a_1m) \leq e^{-a_2m\log m}.
\end{equation}
This prior is similar to that used by \cite{castillo2012needles}, which chooses the model size first according to a distribution with a similar tail decay and then subsets are selected randomly with equal probability. We can also specify the individual priors on $\gamma_{ij}$ the same as above, but now truncating the model size to some fixed $\bar{r}$, where $\bar{r}$ is chosen so as to satisfy the metric entropy condition required for posterior convergence.

\par Thus, in the first situation, the prior on the graphical structure indicator $\bGamma$, given $\bar{R}$, is given by,
\begin{equation}
\label{prior:graphhier}
p(\bGamma \mid \bar{R}) \propto  q^{\#\bGamma}(1-q)^{\binom{p}{2}-\#\bGamma}\Ind(\#\bGamma \leq \bar{R}),
\end{equation}
leading to
\begin{equation}
\label{prior:gamma1}
p(\bGamma) \propto q^{\#\bGamma}(1-q)^{\binom{p}{2}-\#\bGamma}\P(\bar{R} \geq \#\bGamma).
\end{equation}
In the second case, the prior on $\bGamma$ is simply given by
\begin{equation}
\label{prior:gamma2}
p(\bGamma) \propto q^{\#\bGamma}(1-q)^{\binom{p}{2}-\#\bGamma}\Ind(\#\bGamma \leq \bar{r}).
\end{equation}

Smaller values of $q$ prefer graphical models with fewer number of edges, hence inducing more sparsity in the precision matrix.

Due to the positive definiteness constraint on the parameter $\bOmega$, the normalizing constant corresponding posterior distribution of the graphical model becomes intractable and hence was not explored in \cite{wang2012bayesian}. One possible solution is to employ a reversible jump Markov chain Monte Carlo (RJMCMC) algorithm, which jumps from models of varying dimensions to evaluate the posterior probabilities. As there are as many as $2^{\binom{p}{2}}$ possible models, the posterior model probabilities estimated by RJMCMC visits are extremely unreliable. We consider a radically different approach to posterior computation based on Laplace approximations, elaborated in the next section.

Under the above prior specifications, the joint posterior distribution of $\bOmega$ and $\bGamma$ given the data $\bX^{(n)} = (\bX_1,\ldots,\bX_n)$ is given by
\begin{eqnarray}
\label{eqn:jointposterior}
p\{\bOmega,\bGamma|\bX^{(n)}\} &\propto & p\{\bX^{(n)}|\bOmega,\bGamma\}p(\bOmega|\bGamma)p(\bGamma) \nonumber \\
&=& (2\pi)^{np/2}\{\mathrm{det} (\bOmega)\}^{n/2}\exp\left\{-n\,\mathrm{tr}(\widehat{\bSigma}\bOmega)/2\right\} \nonumber \\
&& \times \prod_{\gamma_{ij}=1}\left\{\lambda\exp(-\lambda|\omega_{ij}|)/2\right\}\prod_{i=1}^{p}\left\{\lambda\exp\left(-\lambda\omega_{ii}/2\right)/2\right\} \nonumber \\
&& \times \, p(\bGamma) \Ind_{\mathcal{M}^+}(\bOmega).
\end{eqnarray}
Thus,
\begin{equation}
p\{\bOmega,\bGamma|\bX^{(n)}\} \propto C_{\bGamma}Q\{\bOmega,\bGamma|\bX^{(n)}\},
\end{equation}
where
\begin{eqnarray}
C_{\bGamma} &=& (2\pi)^{np/2}q^{\#\bGamma}(1-q)^{\binom{p}{2}-\#\bGamma}(\lambda/2)^{p+\#\bGamma}\beta(\bGamma), \nonumber \\
\beta(\bGamma) &=& \begin{cases} \P(\bar{R} \geq \#\bGamma),\, \mbox{for prior as in (\ref{prior:gamma1})}, \nonumber \\
\Ind(\#\bGamma \leq \bar{r}),\, \mbox{for prior as in (\ref{prior:gamma2})}, \end{cases} \nonumber \\
Q\{\bOmega,\bGamma|\bX^{(n)}\} &=& \{\mathrm{det} (\bOmega)\}^{n/2}\exp\left\{-n\,\mathrm{tr}(\widehat{\bSigma}\bOmega)/2\right\} \prod_{\gamma_{ij}=1}\left\{\exp(-\lambda|\omega_{ij}|)\right\}\nonumber \\
&& \times \prod_{i=1}^{p}\left\{\exp\left(-\lambda\omega_{ii}/2\right)\right\}\Ind_{\mathcal{M}^+}(\bOmega).
\end{eqnarray}

The following result gives posterior convergence rate as $n \rightarrow \infty$. We assume that the true model is sparse, as given by the class of positive definite matrices in (\ref{matrixclass}).

\begin{theorem}
\label{theorem:postconvrate}
Let $\bX^{(n)} = (\bX_1,\ldots,\bX_n)$ be a random sample from a $p$-dimensional Gaussian distribution with mean $\bm{0}$ and precision matrix $\bOmega_0 \in \mathcal{U}(\varepsilon_0,s)$ for some $0 < \varepsilon_0 < \infty $ and $0 \leq s \leq p(p-1)/2$. Also assume that the prior distributions $p(\bOmega \mid \bGamma)$ and $p(\bGamma)$ as in (\ref{prior:omega}) and (\ref{prior:gamma1}) or (\ref{prior:gamma2}) with $q < 1/2$. Then the posterior distribution of $\bOmega$ satisfies
\begin{equation}
\E_0\left[\P\left\{\|\bOmega - \bOmega_0\|_2 > M\epsilon_n \mid \bX^{(n)}\right\}\right] \rightarrow 0,
\end{equation}
for $\epsilon_n = n^{-1/2}(p+s)^{1/2}(\log p)^{1/2}$ and a sufficiently large constant $M > 0$.
\end{theorem}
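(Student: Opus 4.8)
The plan is to apply the general posterior contraction theorem of Ghosal, Ghosh and van der Vaart (2000), which requires three ingredients: (i) a sieve $\mathcal{P}_n$ of densities whose metric entropy is controlled at the rate $\epsilon_n$; (ii) a prior mass lower bound showing the prior charges a Kullback--Leibler neighbourhood of the true density $p_{\bOmega_0}$ with probability at least $e^{-c n\epsilon_n^2}$; and (iii) a prior mass upper bound (exponentially small) on the complement of the sieve. The natural sieve here is
\begin{equation*}
\mathcal{P}_n = \left\{\bOmega \in \mathcal{M}^+ : \#\bGamma \leq \bar r \text{ (or } \leq a_1 n\epsilon_n^2/\log p), \; \varepsilon_1 \leq \eig_1(\bOmega) \leq \eig_p(\bOmega) \leq \varepsilon_1^{-1}, \; \|\bOmega\|_\infty \leq L \right\}
\end{equation*}
for suitable constants, since on such a set the dominating dimension is $p + s$ free entries each ranging over a bounded interval, giving covering number $\log N \lesssim (p+s)\log(1/\epsilon_n) \lesssim (p+s)\log p \asymp n\epsilon_n^2$. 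One must translate covering in the entrywise/Frobenius metric on $\bOmega$ into covering in Hellinger (or the affinity/KL) metric on the Gaussian densities; this is routine because on a set with eigenvalues bounded away from $0$ and $\infty$ the map $\bOmega \mapsto p_{\bOmega}$ is bi-Lipschitz from Frobenius distance to Hellinger distance, up to factors polynomial in $p$ that are absorbed by enlarging $M$.

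For the prior mass lower bound, I would work on a small Frobenius ball around $\bOmega_0$ of radius a constant multiple of $\epsilon_n$ intersected with the correct model $\bGamma_0$ (the edge set of $\bOmega_0$, which has at most $s$ edges). On this ball the KL divergence and KL variation between $p_{\bOmega}$ and $p_{\bOmega_0}$ are both $O(\|\bOmega-\bOmega_0\|_2^2)$, again using the eigenvalue bounds, so it suffices to lower-bound the prior probability of this Frobenius ball. The prior density (\ref{prior:omega}) restricted to the $\binom{p}{2}$-dimensional slice $\{\bGamma=\bGamma_0\}$ is bounded below on the ball by a constant (the exponential factors $e^{-\lambda|\omega_{ij}|}$, $e^{-\lambda\omega_{ii}/2}$ are bounded below there), so the continuous part contributes at least $c^{p+s}\epsilon_n^{p+s}$, i.e. $\log$ of order $-(p+s)\log(1/\epsilon_n) \gtrsim -n\epsilon_n^2$. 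The discrete part contributes $q^{s}(1-q)^{\binom{p}{2}-s}\beta(\bGamma_0)$; since $q<1/2$ one bounds $\log[q^{s}(1-q)^{\binom p2 - s}]$ from below by $-Cs\log p$ roughly (here I would use the standard inequality $\log(1-q)^{-1}\lesssim \log p$ after choosing, or assuming, $q$ of order a negative power of $p$, or more carefully track that $(1-q)^{\binom p2}$ is not too small — this is the one place needing care), and $\beta(\bGamma_0)=\P(\bar R\geq s)$ is bounded below by a constant once $s$ is below the bulk of $\bar R$, or equals $1$ in the truncated case (\ref{prior:gamma2}). Altogether the prior mass of the KL-neighbourhood is at least $e^{-c n\epsilon_n^2}$, as required.

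For the sieve complement, the eigenvalue and $\|\cdot\|_\infty$ excursions are handled by the exponential tails of the exponential/Laplace priors (the trace term $\sum\omega_{ii}$ controls the top eigenvalue, and positive-definiteness plus these tails control the bottom one via standard arguments), and the model-size excursion $\{\#\bGamma > \bar r\}$ or $\{\#\bGamma > a_1 n\epsilon_n^2/\log p\}$ is handled in the first prior by the assumed tail $\P(\bar R > a_1 m)\leq e^{-a_2 m\log m}$ with $m \asymp p$ (so this is $\leq e^{-a_2' n\epsilon_n^2}$ once $a_1$ is large), and in the second prior it is identically zero by construction. I expect the main obstacle to be the bookkeeping in step (ii): ensuring the product $q^{\#\bGamma_0}(1-q)^{\binom p2-\#\bGamma_0}$ is not exponentially smaller than $e^{-c n\epsilon_n^2}$, which forces a compatibility condition between $q$ and $p$ (essentially $\log(1/q)=O(\log p)$ and $(1-q)^{\binom p2}\geq e^{-O(n\epsilon_n^2)}$), and making sure the polynomial-in-$p$ Lipschitz constants relating Frobenius to Hellinger distance are genuinely absorbed rather than swept under the rug. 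Once these three bounds are in place, Theorem 2.1 of Ghosal, Ghosh and van der Vaart yields $\E_0[\P(h(p_\bOmega,p_{\bOmega_0})>M\epsilon_n\mid \bX^{(n)})]\to 0$, and a final translation from Hellinger back to $\|\bOmega-\bOmega_0\|_2$ on the high-probability event $\{\bOmega\in\mathcal{P}_n\}$ (using the reverse bi-Lipschitz bound) gives the stated conclusion.
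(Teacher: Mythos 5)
Your overall strategy is the same as the paper's: verify the three Ghosal--Ghosh--van der Vaart conditions (prior concentration in a KL neighbourhood, metric entropy of a sieve, exponentially small prior mass on its complement), using the equivalence between Frobenius distance on $\bOmega$ and Hellinger distance on $p_{\bOmega}$ under eigenvalue bounds (the paper's Lemma~\ref{lemma:HellingervsFrob}), Birg\'e/Le~Cam tests for Hellinger balls, and a sieve of models with at most $\bar r\asymp n\epsilon_n^2/\log n$ edges and bounded entries. Two of your side remarks deserve comment. First, your claim that polynomial-in-$p$ constants in the Frobenius--Hellinger comparison could be ``absorbed by enlarging $M$'' is not correct as stated --- such factors would degrade the rate, not the constant --- but the point is moot because under two-sided eigenvalue bounds the comparison constants are dimension-free. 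Note, however, that your sieve explicitly imposes those eigenvalue bounds while the paper's does not, and some such restriction (or an argument circumventing it) is genuinely needed to invoke the norm equivalence on the sieve.

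The one step you flag as ``needing care'' --- the factor $q^{s}(1-q)^{\binom{p}{2}-s}\beta(\bGamma_0)$ in the prior mass of the KL neighbourhood --- is indeed the weakest link, and your sketch does not close it: for fixed $q<1/2$ the unnormalized weight $(1-q)^{\binom{p}{2}-s}$ is of order $e^{-cp^2}$, which is far below the required $e^{-cn\epsilon_n^2}=e^{-c(p+s)\log p}$ in sparse regimes, and the ``compatibility condition'' between $q$ and $p$ you propose is not among the theorem's hypotheses. The resolution is not an extra hypothesis but careful bookkeeping of the normalizing constant of $p(\bGamma)$ in (\ref{prior:gamma1})--(\ref{prior:gamma2}): because the prior is truncated to $\#\bGamma\leq\bar R$ (or $\bar r$), that constant is itself of order $\max_{k\le\bar r}\binom{\binom{p}{2}}{k}q^{k}(1-q)^{\binom{p}{2}-k}$, so the $(1-q)^{\binom{p}{2}}$ factors cancel in the ratio and one is left with $p(\bGamma_0)\gtrsim e^{-C\bar r\log p}=e^{-Cn\epsilon_n^2}$. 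You should carry out this cancellation explicitly. (For what it is worth, the paper's own proof sidesteps the issue entirely by lower-bounding the prior concentration by $(c'\epsilon_n/p)^{p+s}$ without tracking the model-selection prior at all, so your instinct that this is where the argument is most delicate is sound.)
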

The proof uses the general theory of posterior convergence of \cite{ghosal2000convergence} and will be given in the appendix. The above posterior convergence rate matches exactly with the frequentist convergence rate of the penalized estimator obtained in \cite{rothman2008sparse}. 

Note that, Theorem \ref{theorem:postconvrate} gives $\|\bOmega - \bOmega_0\|_2 = O(\epsilon_n)$ with posterior probability tending to one in probability and from \cite{rothman2008sparse} it follows that, $\|\bOmega^* - \bOmega_0\|_2 = O_P(\epsilon_n)$, where $\bOmega^*$ is the graphical lasso estimate. Hence, by the triangle inequality, $\|\bOmega - \bOmega^*\|_2 = O(\epsilon_n)$ with posterior probability tending to one in probability. This gives,
\begin{equation}
\label{eqn:postratio}
\frac{\int_{\|\bOmega-\bOmega^*\|_2 \leq \epsilon_n} \exp\{-n\,h(\bOmega)/2\} \prod_{(i,j) \in \mathcal{V}_{\bGamma}}d\omega_{ij}}{\int_{\bOmega \in \mathcal{M}^+} \exp\{-n\,h(\bOmega)/2\} \prod_{(i,j) \in \mathcal{V}_{\bGamma}}d\omega_{ij}} \rightarrow 1.
\end{equation}

\section{Posterior Computation}

The marginal posterior density of the graphical structure indicator $\bGamma$ can be obtained by integrating out elements of the precision matrix in the joint posterior density in (\ref{eqn:jointposterior}), to get
\begin{equation}
\label{eqn:postprob}
p\{\bGamma|\bX^{(n)}\} \propto C_{\bGamma}\int_{\bOmega \in \mathcal{M}^+} \exp\{-n\,h(\bOmega)/2\} \prod_{(i,j) \in \mathcal{V}_{\bGamma}}d\omega_{ij},
\end{equation}
where
\begin{equation}
\label{eqn:hfunc}
h(\bOmega) = -\log \mathrm{det}(\bOmega) + \mathrm{tr}(\widehat{\bSigma}\bOmega) + \frac{2\lambda}{n}\sum_{\gamma_{ij}=1}|\omega_{ij}| + \frac{\lambda}{n}\sum_{i=1}^{p}\omega_{ii}.
\end{equation}
Note that $h(\bOmega)$ is minimized at $\bOmega = \bOmega^{*}$, the graphical lasso estimate corresponding to the penalty parameter $\rho = \lambda/n$.
The marginal posterior of $\bGamma$ is, however, intractable. We give an approximate method for the posterior probability computations of various models using Laplace approximation. The Laplace approximation requires expanding the integrand in (\ref{eqn:postprob}) around the maximum, which in this case, coincides with the graphical lasso solution.
\subsection{Approximating model posterior probabilities}
Define $\bDelta = \bOmega - \bOmega^{*} = (\!(u_{ij})\!)$, where $\bOmega^{*}$ is the graphical lasso solution corresponding to the underlying graphical model structure and penalty parameter $\lambda/n$. Then, 
\begin{equation}
\begin{split}
\label{eqn:postprob2}
p\{\bGamma|\bX\} &\propto C_{\bGamma}\exp\{-n\,h(\bOmega^*)/2\} \left\{\mathrm{det}(\bOmega^*)\right\}^{-n/2} \\
&\qquad \times \int_{\bDelta + \bOmega^* \in \mathcal{M}^+} \exp\{-n\,g(\bDelta)/2\}\prod_{(i,j) \in \mathcal{V}_{\bGamma}}du_{ij},
\end{split}
\end{equation}
where $g(\bDelta)$ is
\begin{equation}
-\log \mathrm{det}(\bDelta + \bOmega^*) + \mathrm{tr}(\widehat{\bSigma}\bDelta) + \frac{2\lambda}{n}\sum_{\gamma_{ij}=1}(|u_{ij} + \omega^*_{ij}|- |\omega^*_{ij}|) + \frac{\lambda}{n}\sum_{i=1}^{p}u_{ii}.
\end{equation}
Clearly $g(\bDelta)$ is minimized at $\bDelta = \bm{0}$ by the definition of $\bOmega^*$, so the first derivative of $g(\bDelta)$ vanishes at $\bm{0}$, provided that it is differentiable at $\bm{0}$. Define the matrix $\bm{H}_{\bm{B}} = [h_{\bm{B}}\{(i,j),(l,m)\}]$, where
\begin{equation}
h_{\bm{B}}\{(i,j),(l,m)\}= \tr\left\{\bm{B}^{-1}\bm{E}_{(i,j)}\bm{B}^{-1}\bm{E}_{(l,m)}\right\}.
\end{equation}
Using standard matrix calculus (for example, see Section 15.9 of \cite{harville2008matrix}), we can find that the Hessian of $g(\bDelta)$ is the $\#\mathcal{V}_{\bGamma} \times \#\mathcal{V}_{\bGamma}$ matrix $\bm{H}_{\bDelta+\bOmega^*}$, whose $\{(i,j),(l,m)\}$th entry for $(i,j), (l,m) \in \mathcal{V}_{\bGamma}$ is given by
\begin{equation}
\frac{\partial^2 g(\bDelta)}{\partial u_{ij} \partial u_{lm}} = \mathrm{tr}\left\{(\bDelta + \bOmega^*)^{-1}\bm{E}_{(i,j)}(\bDelta + \bOmega^*)^{-1}\bm{E}_{(l,m)}\right\}.
\end{equation}
Thus the Laplace approximation $p^*\{\bGamma \mid \bX^{(n)}\}$ to the posterior probability $p\{\bGamma \mid \bX^{(n)}\}$ is given by
\begin{equation}
\begin{split}
\label{eqn:postprbapprox}
p^*\{\bGamma|\bX^{(n)}\} &\propto  C_{\bGamma}\exp\{-n\,h(\bOmega^*)/2\} \left\{\mathrm{det}(\bOmega^*)\right\}^{-n/2} \exp\{-n\,g(\bm{0})/2\} \\
&\qquad \times (2\pi)^{\#\mathcal{V}_{\bGamma}/2}(n/2)^{-\#\mathcal{V}_{\bGamma}/2}\left[\mathrm{det}\left\{\left.\frac{\partial^2 g(\bDelta)}{\partial \bDelta \partial \bDelta^T}\right|_{\bm{0}}\right\}\right]^{-1/2} \\
&= C_{\bGamma}\exp\{-n\,h(\bOmega^*)/2\} (\pi/n)^{\#\mathcal{V}_{\bGamma}/2} \{\mathrm{det}(\bm{H}_{\bOmega^*})\}^{-1/2}.
\end{split}
\end{equation}
The approximation in (\ref{eqn:postprbapprox}) is meaningful only if all the graphical lasso estimates of the off-diagonal elements corresponding to the graph generated by $\bGamma$ are non-zero; otherwise the derivative of $g(\bDelta)$ does not exist. A similar situation arises in the context of regression models; see \cite{yuan2005efficient} and \cite{mckay2013fast}. In the next section, we show that such ``non-regular models" can essentially be ignored for the purpose of posterior probability evaluation.

\subsection{Ignorability of non-regular models}
\label{subsec:nonregular}
As discussed in the previous section, the objective function of the graphical lasso problem is not differentiable if the graphical lasso solution is zero for at least one pair $(i,j) \in E$. These models are referred to as non-regular models. This essentially means that given a fixed graphical structure index $\bGamma$, the graphical lasso solution is $\omega_{ij}^* = 0$ for at least one $\gamma_{ij}=1$. Let us assume, for notational simplicity, that the first $t$ elements of $\bGamma$ are 1 and the rest are 0. Also, among those $t$ 1's, the last $r$ of them have corresponding graphical lasso solution equal to zero. For such a non-regular model, we argue that the submodel $\bGamma'$, with first $(t-r)$ 1's and rest 0's, provides the same graphical lasso solution for the non-zero elements as the bigger model $\bGamma$. This means that for $(i,j)$ such that $\gamma_{ij} = \gamma'_{ij} = 1$, the graphical lasso solution corresponding to $\bGamma$, given by $\omega_{\bGamma,ij}^*$ is identical with that corresponding to $\bGamma'$, given by $\omega_{\bGamma',ij}^*$. We refer to such a submodel $\bGamma'$ as the regular submodel of the non-regular model $\bGamma$.
\begin{lemma}
\label{lemma:nonregular}
For a submodel $\bGamma'$ of $\bGamma$ as defined above, the graphical lasso solution corresponding to the two models are identical.
\end{lemma}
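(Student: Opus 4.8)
The plan is to reformulate both graphical lasso problems as constrained optimizations over the cone $\mathcal{M}^+$ and then show that the optimal solution of the larger model $\bGamma$ is feasible and optimal for the smaller model $\bGamma'$, and vice versa. Recall that $\bOmega_{\bGamma}^*$ minimizes
\begin{equation*}
f_{\bGamma}(\bOmega) = -\log\det(\bOmega) + \tr(\widehat{\bSigma}\bOmega) + \frac{2\lambda}{n}\sum_{\gamma_{ij}=1}|\omega_{ij}| + \frac{\lambda}{n}\sum_{i=1}^p \omega_{ii}
\end{equation*}
over $\bOmega \in \mathcal{M}^+$ with the constraint $\omega_{ij}=0$ whenever $\gamma_{ij}=0$. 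The model $\bGamma'$ is identical except that the last $r$ of the $t$ included off-diagonal coordinates are additionally constrained to zero. First I would observe that, by the assumption defining a non-regular model, the minimizer $\bOmega_{\bGamma}^*$ already has $\omega^*_{\bGamma,ij}=0$ for precisely those last $r$ coordinates; hence $\bOmega_{\bGamma}^*$ satisfies all the defining constraints of the model $\bGamma'$, i.e.\ it is feasible for the $\bGamma'$-problem. Since the objective function $f$ restricted to the $\bGamma'$-feasible set is exactly $f_{\bGamma}$ evaluated at matrices with those extra zeros (the $\ell_1$ terms for the vanishing coordinates contribute nothing), and $f_{\bGamma'}$ agrees with $f_{\bGamma}$ on this set, $\bOmega_{\bGamma}^*$ achieves the value $f_{\bGamma}(\bOmega_{\bGamma}^*)$ as a candidate for the $\bGamma'$-problem.

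Next I would argue the reverse inequality. The feasible set for $\bGamma'$ is a subset of the feasible set for $\bGamma$ (more zero constraints), so $\min f_{\bGamma'} \ge \min f_{\bGamma} = f_{\bGamma}(\bOmega_{\bGamma}^*)$, because any $\bGamma'$-feasible matrix is also $\bGamma$-feasible and $f_{\bGamma'}$ and $f_{\bGamma}$ coincide on $\bGamma'$-feasible matrices. Combined with the previous paragraph, $\bOmega_{\bGamma}^*$ is a minimizer of $f_{\bGamma'}$ over the $\bGamma'$-feasible set. Since $f_{\bGamma'}$ is strictly convex on $\mathcal{M}^+$ — the term $-\log\det(\bOmega)$ is strictly convex, the linear and $\ell_1$ pieces are convex, and the feasible set is convex — the minimizer is unique, so $\bOmega_{\bGamma'}^* = \bOmega_{\bGamma}^*$. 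In particular the nonzero entries agree coordinatewise, which is the claim.

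The only delicate point — and the step I would present most carefully — is the strict convexity / uniqueness argument together with the verification that $f_{\bGamma}$ and $f_{\bGamma'}$ genuinely agree on the common feasible set, which hinges on the simple fact that the omitted $\ell_1$ penalties $|\omega_{ij}|$ vanish identically there. One should also note that the $\bGamma'$-feasible set is nonempty (it contains, e.g., a suitable multiple of $\bm{I}_p$, or indeed $\bOmega_{\bGamma}^*$ itself), so the minimum is attained. No appeal to KKT conditions or subgradients is needed; the whole argument is a two-sided comparison of minima of the same function over nested convex sets, exploiting only that the non-regularity hypothesis places the larger problem's optimum inside the smaller problem's feasible region. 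I do not anticipate a serious obstacle here beyond bookkeeping; the informal sketch in the paragraph preceding the lemma already contains the essential idea, and the proof is a matter of making the nesting-of-feasible-sets argument precise.
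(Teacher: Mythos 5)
Your argument is correct, but it takes a genuinely different route from the paper. The paper proves the lemma by verifying optimality conditions: it writes down the KKT/subgradient condition $\bOmega^{*-1}_{\bGamma} - \widehat{\bSigma} - \lambda\bm{G} = 0$ satisfied by the graphical lasso solution for $\bGamma$, constructs the candidate matrix for $\bGamma'$ by copying $\bOmega^*_{\bGamma}$, and checks that this candidate satisfies the KKT condition for the $\bGamma'$-problem, hence is a solution there. You instead compare optimal values over nested feasible sets: since the $\bGamma'$-feasible set is contained in the $\bGamma$-feasible set, since the two objectives coincide on the smaller set (the omitted $\ell_1$ terms vanish identically there), and since the non-regularity hypothesis places $\bOmega^*_{\bGamma}$ inside the smaller feasible set, $\bOmega^*_{\bGamma}$ attains the $\bGamma'$-minimum; strict convexity of $-\log\det$ then gives uniqueness and hence equality of the two solutions. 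Your version is more elementary --- it needs no subgradient calculus --- and it has the added virtue of making explicit the uniqueness claim that the paper leaves implicit when it speaks of ``the'' graphical lasso solution (and that is also needed to conclude from a KKT point to the global minimizer). The paper's KKT route, on the other hand, dovetails with the subgradient computations used later in Lemma A.2 for bounding the posterior ratio of non-regular to regular models. The only points worth making explicit in your write-up are (i) that the minimum is actually attained in the open cone $\mathcal{M}^+$ (coercivity, which follows from the diagonal penalty $\lambda n^{-1}\sum_i\omega_{ii}$ together with the blow-up of $-\log\det$ at the boundary), which you gesture at but should state, and (ii) that ``graphical lasso solution for model $\bGamma$'' means exactly the minimizer of $h$ over the $\bGamma$-pattern-restricted positive definite matrices, which matches the paper's usage. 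Neither is a gap in the logic.
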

We give a proof of the above lemma in the appendix. For notational convenience, let us denote the precision matrix $\bOmega$ corresponding to the structure indicator $\bGamma$ by $\bOmega_{\bGamma}$, and corresponding matrix $\bDelta_{\bGamma}$ is defined by $\bOmega_{\bGamma} - \bOmega^* = (\!(u_{\bGamma,ij})\!)$.
We denote the graphical lasso solution in the non-regular model $\bGamma$ and the corresponding regular submodel $\bGamma'$ by $\bOmega^*$. The ratio of the posterior model probabilities of the two model is given by,
\begin{equation}
\label{eqn:nonregularratio}
\frac{p\{\bGamma|\bX^{(n)}\}}{p\{\bGamma'|\bX^{(n)}\}} = \frac{C_{\bGamma} \int_{\bDelta_{\bGamma}+\bOmega^* \in \mathcal{M}^+} \exp\{-n\,h(\bDelta_{\bGamma})/2\} \prod_{(i,j) \in \mathcal{V}_{\bGamma}}du_{\bGamma,ij}}{C_{\bGamma'} \int_{\bDelta_{\bGamma'}+\bOmega^* \in \mathcal{M}^+} \exp\{-n\,h(\bDelta_{\bGamma'})/2\} \prod_{(i,j) \in \mathcal{V}_{\bGamma'}}du_{\bGamma',ij}}.
\end{equation}
The following result shows the ignorability of the non-regular models.

\begin{theorem}
\label{theorem:nonregular}
Consider the prior on $\bGamma$ as given in (\ref{prior:gamma1}) or (\ref{prior:gamma2}) with $q < 1/2$. The posterior probability of a non-regular model $\bGamma$, as defined above, is always less than that of its regular submodel $\bGamma'$.
\end{theorem}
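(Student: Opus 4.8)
The plan is to bound each factor in the posterior ratio displayed in (\ref{eqn:nonregularratio}), which I write as $\dfrac{p\{\bGamma\mid\bX^{(n)}\}}{p\{\bGamma'\mid\bX^{(n)}\}}=\dfrac{C_\bGamma}{C_{\bGamma'}}\cdot\dfrac{N}{D}$, with $N,D$ the two integrals appearing there and $h_\bGamma$ denoting the function $h$ of (\ref{eqn:hfunc}), the subscript recording its dependence on the model. By Lemma~\ref{lemma:nonregular} the models $\bGamma$ and $\bGamma'$ share the graphical lasso estimate $\bOmega^*$; write $R$ for the set of the $r:=\#\bGamma-\#\bGamma'$ edges of $\bGamma$ on which $\bOmega^*$ vanishes (so $\mathcal V_\bGamma=\mathcal V_{\bGamma'}\cup R$). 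Since the $\ell_1$ penalty in $h_\bGamma$ is a sum over edges, $h_\bGamma(\bOmega)=h_{\bGamma'}(\bOmega)+\tfrac{2\lambda}{n}\sum_{(i,j)\in R}|\omega_{ij}|$, which (as $\omega^*_{ij}=0$ on $R$) in particular gives $h_\bGamma(\bOmega^*)=h_{\bGamma'}(\bOmega^*)$. A direct computation from the definition of $C_\bGamma$ yields
\[
\frac{C_\bGamma}{C_{\bGamma'}}=\Bigl(\frac{q}{1-q}\Bigr)^{\!r}\Bigl(\frac{\lambda}{2}\Bigr)^{\!r}\frac{\beta(\bGamma)}{\beta(\bGamma')},
\]
and since $\beta(\cdot)$ is non-increasing in the model size under either (\ref{prior:gamma1}) or (\ref{prior:gamma2}) we have $\beta(\bGamma)\le\beta(\bGamma')$, while $q<1/2$ gives $q/(1-q)<1$. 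Hence it suffices to prove the integral bound $N\le(2/\lambda)^r D$: the ratio is then at most $(q/(1-q))^r<1$, because $r\ge 1$ for a non-regular model.

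For the integral bound I would integrate out the $r$ coordinates associated with $R$. Writing $e^{-nh_\bGamma(\bOmega)/2}=e^{-\lambda\|\bm u_R\|_1}e^{-nh_{\bGamma'}(\bOmega)/2}$ with $\bm u_R=(\omega_{ij})_{(i,j)\in R}$, Fubini's theorem gives
\[
N=\int_{\RR^r}e^{-\lambda\|\bm u_R\|_1}\Phi(\bm u_R)\,d\bm u_R,\qquad
\Phi(\bm u_R)=\int_{\{\bOmega\in\mathcal P_\bGamma:\;(\omega_{ij})_{(i,j)\in R}=\bm u_R\}}e^{-nh_{\bGamma'}(\bOmega)/2}\prod_{(i,j)\in\mathcal V_{\bGamma'}}d\omega_{ij},
\]
where $\mathcal P_\bGamma$ is the cone of positive definite matrices supported on $\mathcal V_\bGamma$. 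Because $\{\bOmega\in\mathcal P_\bGamma:(\omega_{ij})_{(i,j)\in R}=\bm0\}=\mathcal P_{\bGamma'}$, the denominator is $D=\Phi(\bm0)$. Thus $N\le(2/\lambda)^rD$ is exactly the statement that the slice integral $\Phi$, reweighted by the Laplace density $e^{-\lambda\|\bm u_R\|_1}$, integrates to no more than $(2/\lambda)^r\Phi(\bm0)$ --- that is, that each extra $R$-direction contributes a mass factor at most $2/\lambda=\int e^{-\lambda|u|}\,du$.

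The control of $\Phi$ is the crux and the step I expect to be the main obstacle. As $h_{\bGamma'}$ is convex (a sum of $-\log\det$, a linear term, and $\ell_1$ penalties), $e^{-nh_{\bGamma'}(\bOmega)/2}$ is log-concave in $\bOmega$, so $\Phi$ is log-concave on $\RR^r$ by Pr\'ekopa's theorem; moreover $\bOmega^*$ (where $\bm u_R=\bm0$) minimizes $h_\bGamma$, and the subgradient (Karush--Kuhn--Tucker) optimality conditions for the zeroed edges give $|\widehat\sigma_{ij}-(\bOmega^{*-1})_{ij}|\le\lambda/n$ for $(i,j)\in R$ --- exactly the inequality under which the kink of $e^{-\lambda\|\bm u_R\|_1}$ at the origin ``catches'' the product $e^{-\lambda\|\bm u_R\|_1}\Phi(\bm u_R)$ at a maximum. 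One cannot, however, simply conclude $\Phi(\bm u_R)\le\Phi(\bm0)$: marginalizing a log-concave density over the remaining coordinates shifts its peak, so $\Phi$ need not attain its maximum at the origin. The honest route is to keep the \emph{strict} log-concavity of $\Phi$, which comes from the positive definite Hessian $\bm H_{\bOmega^*}$ of $-\log\det$, to obtain a Gaussian-type upper envelope for $\Phi$ around its mode; to locate that mode within $O(\lambda/n)$ of the origin using the subgradient inequality; and then to compare that Gaussian factor against the Laplace weight $e^{-\lambda|u_{ij}|}$ coordinate by coordinate. Finally one observes that the factor $(\lambda/2)^r$ thrown off by this estimate is precisely the one sitting in $C_\bGamma$, so the two cancel and leave the clean prior factor $(q/(1-q))^r$; this is why $q<1/2$ is exactly the hypothesis required.
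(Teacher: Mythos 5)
Your skeleton is exactly the paper's: you factor the ratio in (\ref{eqn:nonregularratio}) as $(C_\bGamma/C_{\bGamma'})\cdot(N/D)$, compute $C_\bGamma/C_{\bGamma'}=(q/(1-q))^r(\lambda/2)^r\beta(\bGamma)/\beta(\bGamma')$, and correctly identify that everything reduces to $N\le(2/\lambda)^rD$, after which $q<1/2$ finishes the argument. But that reduction is the entire content of the theorem, and you explicitly leave it unproven: you observe that the needed inequality is $\int e^{-\lambda\|\bm{u}_R\|_1}\Phi(\bm{u}_R)\,d\bm{u}_R\le(2/\lambda)^r\Phi(\bm{0})$, note (correctly) that log-concavity of the joint integrand does not force the slice integral $\Phi$ to peak at the origin, and then sketch a Gaussian-envelope/mode-location argument that is not carried out. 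That sketch would not close the gap as stated: an envelope comparison of that kind produces the factor $(2/\lambda)^r$ only up to multiplicative constants depending on the Hessian and on how far the mode of $\Phi$ drifts, and the theorem needs the \emph{exact} constant $(2/\lambda)^r$ to cancel the $(\lambda/2)^r$ in $C_\bGamma/C_{\bGamma'}$ and leave the clean bound $(q/(1-q))^r\le 1$. So as written the proof has a genuine hole at its central step.

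The paper closes this hole not by studying the marginal $\Phi$ at all, but by a \emph{pointwise, slice-matched} comparison of integrands (Lemma \ref{lemma:Nonregularitylemma}): for each fixed value of the shared coordinates (within the ball $\|\bDelta_\bGamma\|_2\le\epsilon_n$ supplied by (\ref{eqn:postratio})), the concave function $\bm{u}_R\mapsto\log\det(\bDelta_\bGamma+\bOmega^*)-\tr(\widehat{\bSigma}\bDelta_\bGamma)$ is maximized at $\bm{u}_R=\bm{0}$, because the Karush--Kuhn--Tucker subgradient conditions at the graphical lasso solution confine its one-sided derivatives in each $R$-direction to $[-2\lambda/n,2\lambda/n]$, which is exactly absorbed by the kink of the penalty. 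Integrating this pointwise inequality over the shared coordinates gives $\Phi(\bm{u}_R)\le\Phi(\bm{0})$ directly --- no statement about marginals of log-concave densities is needed, so the ``peak shifts under marginalization'' obstacle you flag simply does not arise. With that lemma in hand, Fubini over the $R$-coordinates yields $N\le(2/\lambda)^rD$ and your computation of $C_\bGamma/C_{\bGamma'}$ completes the proof. To repair your argument, replace the envelope heuristic with this slice-by-slice KKT comparison (and keep the restriction to the $\epsilon_n$-ball, which is where the lemma is asserted and where the domain inclusion between the $\bGamma$- and $\bGamma'$-integrals is used).
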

\begin{proof}
Using (\ref{eqn:postratio}), we have,
\begin{equation}
\frac{p\{\bGamma|\bX^{(n)}\}}{p\{\bGamma'|\bX^{(n)}\}}  = 
\frac{C_{\bGamma}\int_{\|\bDelta_{\bGamma}\|_2 \leq \epsilon_n} \exp\{-n\,h(\bDelta_{\bGamma})/2\}\prod_{(i,j) \in \mathcal{V}_{\bGamma}}du_{\bGamma,ij} + o(1)}{C_{\bGamma'} \int_{\|\bDelta_{\bGamma'}\|_2 \leq \epsilon_n} \exp\{-n\,h(\bDelta_{\bGamma'})/2\} \prod_{(i,j) \in \mathcal{V}_{\bGamma'}}du_{\bGamma',ij} + o(1)}. \nonumber \\
\end{equation}
Now, note that for $(i,j)$ such that $\gamma_{ij} = \gamma'_{ij} =1$, we have,
\begin{equation*}
 \{u_{\bGamma,ij}: \|\bDelta_{\bGamma}\|_2 \leq \epsilon_n\} \subset \{u_{\bGamma',ij}: \|\bDelta_{\bGamma'}\|_2 \leq \epsilon_n\}. 
\end{equation*} 
 Hence, using Lemma \ref{lemma:Nonregularitylemma}, we get
\begin{eqnarray}
\frac{p\{\bGamma|\bX^{(n)}\}}{p\{\bGamma'|\bX^{(n)}\}} & \leq & 
 \frac{C_{\bGamma}}{C_{\bGamma'}}\int_{\|\bDelta_{\bGamma}\|_2 \leq \epsilon_n} \exp\left(-\frac{n}{2}\frac{2\lambda}{n}\sum_{\gamma_{ij} = 1,\gamma'_{ij}=0}|u_{\bGamma,ij}|\right)\prod_{(i,j) \in \mathcal{V}_{\bGamma}\cap \mathcal{V}_{\bGamma'}^c}du_{\bGamma,ij} \nonumber \\
& \leq & \frac{C_{\bGamma}}{C_{\bGamma'}} \int \exp\left(-\lambda \sum_{\gamma_{ij} = 1,\gamma'_{ij}=0}|u_{\bGamma,ij}|\right)\prod_{(i,j) \in \mathcal{V}_{\bGamma}\cap \mathcal{V}_{\bGamma'}^c}du_{\bGamma,ij} \nonumber \\
& = & \frac{C_{\bGamma}}{C_{\bGamma'}} \left( \frac{2}{\lambda} \right)^{\#\bGamma - \#\bGamma'} \nonumber \\
& = & \left(\frac{q}{1-q}\right)^r \frac{\beta(\bGamma)}{\beta(\bGamma')} \nonumber \\
& \leq & \left(\frac{q}{1-q}\right)^r.
\end{eqnarray}
The last inequality follows from the fact that if the prior as in (\ref{prior:gamma1}) is used, then $\P(\bar{R} \geq \#\bGamma) \leq \P(\bar{R} \geq \#\bGamma')$ since $\#\bGamma > \#\bGamma'$. For the other prior as in (\ref{prior:gamma2}), the inequality follows trivially as it involves the ratio of two indicator variables only.
\par For $q < 1/2$, the above ratio is less than $1$. This completes the proof.
\end{proof}
The above result is particularly important in the sense that we can focus on the regular models only, ignoring the non-regular ones especially if $q$ is chosen to be small. While approximating the posterior probabilities of the regular models, we re-normalize the values considering the regular models only.

\subsection{Error in Laplace approximation}
The approximation in the posterior probability of the graphical model is based on a Taylor series expansion of the function $h(\bOmega)$ around the graphical lasso solution $\bOmega^*$. Let $\bDelta = \bOmega - \bOmega^*$, and $\mathrm{vec}(\bDelta)$ denote the vectorized version of $\bDelta$, but excluding entries corresponding to the missing edges in the underlying graphical model. Thus $\mathrm{vec}(\bDelta)$ is a vector of dimension $\#\mathcal{V}_{\bGamma}$ corresponding to the graphical structure indicator $\bGamma$. If the graphical model is $s$-sparse, that is, there are $s$ edges present in the graph, then $\#\mathcal{V}_{\bGamma} = p+s$. The following result gives the bound on the remainder term of the Taylor series expansion under the above assumptions.
\begin{lemma}
\label{lemma:Rnbound}
Consider a graphical model with $p$ variables such that the graph is $s$-sparse. Then, with probability tending to 1, the remainder term in the expansion of the function $h(\bOmega)$ as defined in (\ref{eqn:hfunc}), around the graphical lasso solution $\bOmega^*$, is bounded by $(p+s)\|\bDelta\|_2^2\left(C_1\|\bDelta\|_2 + C_2 \|\bDelta\|_2^2\right)/2$, where $\bDelta = \bOmega - \bOmega^*$.
\end{lemma}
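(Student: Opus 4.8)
\emph{Proof plan.} The plan is to isolate the only nonlinear term of $h$, namely $-\log\det$, and to bound the second‑order Taylor remainder of that term using the resolvent identity together with the eigenvalue bounds (\ref{eqn:glassoeigbound}) and the matrix‑norm inequalities (\ref{eqn:matrixnorm}).

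First I would record that, among the four summands of $h$ in (\ref{eqn:hfunc}), the term $\tr(\widehat{\bSigma}\bOmega)$ and the diagonal penalty $\lambda n^{-1}\sum_i\omega_{ii}$ are linear in $\bOmega$, and that — because the model is regular, so $\omega^*_{ij}\neq0$ for every $(i,j)\in E$ — for $\bDelta=\bOmega-\bOmega^*$ with $\|\bDelta\|_\infty<\min_{(i,j)\in E}|\omega^*_{ij}|$ the signs of $\omega^*_{ij}+u_{ij}$ agree with those of $\omega^*_{ij}$, so the $\ell_1$ penalty is affine in $\bDelta$ near $\bm{0}$ as well. Hence these three terms contribute nothing beyond first order, the Hessian of $h$ at any positive definite $\bm{B}$ is exactly the matrix $\bm{H}_{\bm{B}}$ of the text, and, since $\bOmega^*$ minimizes $h$ and $h$ is differentiable there, $\nabla h(\bOmega^*)=\bm{0}$. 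Writing $\bm{v}=\mathrm{vec}(\bDelta)$ and $\widetilde{\bOmega}_t=\bOmega^*+t\bDelta$, Taylor's theorem with integral remainder gives $h(\bOmega)=h(\bOmega^*)+\tfrac12\bm{v}^T\bm{H}_{\bOmega^*}\bm{v}+R_n$ with $R_n=\int_0^1(1-t)\,\bm{v}^T(\bm{H}_{\widetilde{\bOmega}_t}-\bm{H}_{\bOmega^*})\bm{v}\,dt$, so that $|R_n|\le\tfrac12\|\bDelta\|_2^2\sup_{0\le t\le1}\|\bm{H}_{\widetilde{\bOmega}_t}-\bm{H}_{\bOmega^*}\|_2$, using $\|\bm{v}\|_2\le\|\bDelta\|_2$ and $\|\cdot\|_{(2,2)}\le\|\cdot\|_2$.

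The crux is then to bound $\|\bm{H}_{\widetilde{\bOmega}_t}-\bm{H}_{\bOmega^*}\|_2$ entrywise. With $\bm{G}_0=\bOmega^{*-1}$, $\bm{G}=\widetilde{\bOmega}_t^{-1}$ and $\bm{F}=\bm{G}-\bm{G}_0$, I would expand $\bm{G}\bm{E}_{(i,j)}\bm{G}-\bm{G}_0\bm{E}_{(i,j)}\bm{G}_0=\bm{F}\bm{E}_{(i,j)}\bm{G}_0+\bm{G}_0\bm{E}_{(i,j)}\bm{F}+\bm{F}\bm{E}_{(i,j)}\bm{F}$, take the trace against $\bm{E}_{(l,m)}$, and use Cauchy--Schwarz for the Frobenius inner product, (\ref{eqn:matrixnorm}), $\|\bm{E}_{(i,j)}\|_{(2,2)}\le1$ and $\|\bm{E}_{(i,j)}\|_2\le\sqrt2$ to bound the $\{(i,j),(l,m)\}$ entry of $\bm{H}_{\widetilde{\bOmega}_t}-\bm{H}_{\bOmega^*}$ by a constant multiple of $\|\bm{F}\|_2\|\bm{G}_0\|_{(2,2)}+\|\bm{F}\|_2^2$. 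The resolvent identity $\bm{F}=-t\,\bOmega^{*-1}\bDelta\,\widetilde{\bOmega}_t^{-1}$ and (\ref{eqn:matrixnorm}) give $\|\bm{F}\|_2\le\|\bOmega^{*-1}\|_{(2,2)}\|\widetilde{\bOmega}_t^{-1}\|_{(2,2)}\|\bDelta\|_2$; and since $\eig_1(\widetilde{\bOmega}_t)\ge\eig_1(\bOmega^*)-\|\bDelta\|_2$ while (\ref{eqn:glassoeigbound}) keeps $\eig_1(\bOmega^*)$ bounded away from $0$ with probability tending to one, one gets $\|\bOmega^{*-1}\|_{(2,2)}=O(1)$ and $\|\widetilde{\bOmega}_t^{-1}\|_{(2,2)}=O(1)$, hence $\|\bm{F}\|_2\le c\|\bDelta\|_2$, uniformly over $t\in[0,1]$ and over $\bOmega$ in the effective integration region $\|\bDelta\|_2\le\epsilon_n$ (cf.\ (\ref{eqn:postratio}) and Theorem \ref{theorem:postconvrate}). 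Thus each of the $(\#\mathcal{V}_{\bGamma})^2=(p+s)^2$ entries of $\bm{H}_{\widetilde{\bOmega}_t}-\bm{H}_{\bOmega^*}$ is at most $C_1\|\bDelta\|_2+C_2\|\bDelta\|_2^2$, so $\|\bm{H}_{\widetilde{\bOmega}_t}-\bm{H}_{\bOmega^*}\|_2\le(p+s)(C_1\|\bDelta\|_2+C_2\|\bDelta\|_2^2)$, and substituting into the bound for $|R_n|$ yields the claim. (Expanding $-\log\det$ via the matrix‑logarithm series $-\sum_{k\ge3}\frac{(-1)^{k+1}}{k}\tr(\bmA^k)$ with $\bmA=\bOmega^{*-1/2}\bDelta\bOmega^{*-1/2}$ would even give the sharper $O_P(\|\bDelta\|_2^3)$, but the route above reproduces the stated form directly.)

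The routine parts are the matrix‑calculus identity for the Hessian and the norm bookkeeping; the step I expect to be the main obstacle is verifying that the conditioning of $\widetilde{\bOmega}_t$ — and hence the $O(1)$ control of $\|\widetilde{\bOmega}_t^{-1}\|_{(2,2)}$, on which every estimate above rests — holds uniformly over $t\in[0,1]$ and over all $\bDelta$ in the region of integration with probability tending to one. A secondary subtlety is keeping $\|\bDelta\|_\infty<\min_{(i,j)\in E}|\omega^*_{ij}|$ so that the $\ell_1$ term is genuinely affine, which is a real issue when the smallest nonzero graphical‑lasso coordinate is not bounded away from zero.
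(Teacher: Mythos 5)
Your proposal is correct and follows essentially the same route as the paper: Taylor's theorem with integral remainder, an entrywise bound on $\bm{H}_{\bOmega^*+\nu\bDelta}-\bm{H}_{\bOmega^*}$ of order $C_1\|\bDelta\|_2+C_2\|\bDelta\|_2^2$ obtained from a resolvent bound on $(\bOmega^*+\nu\bDelta)^{-1}-\bOmega^{*-1}$ together with (\ref{eqn:glassoeigbound}), and a dimension factor $(p+s)$ to pass from entrywise to a matrix-norm bound. Your explicit handling of the local affineness of the $\ell_1$ penalty (regularity of the model and $\|\bDelta\|_\infty<\min_{(i,j)\in E}|\omega^*_{ij}|$) and of the vanishing gradient at $\bOmega^*$ makes precise two points the paper leaves implicit, but does not change the argument.
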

This result can be used to find a bound for the error in Laplace approximation of the posterior probabilities of the graphical model structures. The following result gives the condition for which the error in approximation is asymptotically negligible.
\begin{theorem}
\label{theorem:appxerror}
The error in Laplace approximation of the posterior probability of a graphical model structure is asymptotically negligible if $(p+s)^2\epsilon_n \rightarrow 0$, where $\epsilon_n$ is the posterior convergence rate, that is, the error in the Laplace approximation tends to zero if $n^{-1/2}(p+s)^{5/2}(\log p)^{1/2} \rightarrow 0$.
\end{theorem}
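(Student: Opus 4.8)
The plan is to control the relative error $p\{\bGamma\mid\bX^{(n)}\}/p^{*}\{\bGamma\mid\bX^{(n)}\}$ directly. After the common factor $C_{\bGamma}\exp\{-nh(\bOmega^{*})/2\}$ cancels between (\ref{eqn:postprob}) and (\ref{eqn:postprbapprox}), this ratio is the quotient of the exact integral $\int_{\bOmega\in\mathcal{M}^{+}}\exp\{-\tfrac{n}{2}[h(\bOmega)-h(\bOmega^{*})]\}\prod_{(i,j)\in\mathcal{V}_{\bGamma}}du_{ij}$ by the Gaussian integral $\int_{\RR^{\#\mathcal{V}_{\bGamma}}}\exp\{-\tfrac{n}{4}\mathrm{vec}(\bDelta)^{T}\bm{H}_{\bOmega^{*}}\mathrm{vec}(\bDelta)\}\prod du_{ij}$ implicit in (\ref{eqn:postprbapprox}), where $\bDelta=\bOmega-\bOmega^{*}$. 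A short computation with (\ref{eqn:hfunc}) shows $h(\bOmega)-h(\bOmega^{*})=\tfrac12\mathrm{vec}(\bDelta)^{T}\bm{H}_{\bOmega^{*}}\mathrm{vec}(\bDelta)+R_{n}(\bDelta)$, the linear term dropping out because $\bOmega^{*}$ minimises $h$ and, on a regular model, the $\ell_{1}$ and trace parts of $h$ are affine in $\bDelta$ near $\bm{0}$; thus the only nonlinear contribution comes from $-\log\det$, whose second-order remainder is precisely the term $R_{n}(\bDelta)$ bounded in Lemma \ref{lemma:Rnbound}. It therefore suffices to show that inserting $\exp\{-\tfrac{n}{2}R_{n}(\bDelta)\}$ into the Gaussian integral perturbs it only by a factor $1+o(1)$.

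\emph{Localisation.} I would first use Theorem \ref{theorem:postconvrate} and (\ref{eqn:postratio}) to replace $\int_{\mathcal{M}^{+}}$ by the integral over $\{\|\bDelta\|_{2}\le\epsilon_{n}\}$ at the cost of a factor $1+o(1)$; on that ball $\bOmega^{*}+\bDelta$ stays positive definite for all large $n$ since $\eig_{1}(\bOmega^{*})$ is bounded away from $0$ by (\ref{eqn:glassoeigbound}) while $\|\bDelta\|_{2}\to0$. Conversely, the eigenvalues of $\bm{H}_{\bOmega^{*}}$, whose entries are $\tr(\bOmega^{*-1}\bm{E}_{(i,j)}\bOmega^{*-1}\bm{E}_{(l,m)})$, lie between positive constants determined by $\eig_{1}(\bOmega^{*-1})$ and $\eig_{p}(\bOmega^{*-1})$, again bounded via (\ref{eqn:glassoeigbound}); hence the Gaussian density proportional to $\exp\{-\tfrac{n}{4}\mathrm{vec}(\bDelta)^{T}\bm{H}_{\bOmega^{*}}\mathrm{vec}(\bDelta)\}$ lives on the scale $\|\bDelta\|_{2}\asymp\{(p+s)/n\}^{1/2}$, so the mass it puts on $\{\|\bDelta\|_{2}>\epsilon_{n}\}$ is of order $\exp\{-c(p+s)\log p\}$ and can be dropped too. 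Both integrals are now over $\{\|\bDelta\|_{2}\le\epsilon_{n}\}$.

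\emph{The cubic remainder.} Writing $\E_{G}$ for expectation under the truncated Gaussian density just described, the localised relative error equals $\E_{G}[\exp\{-\tfrac{n}{2}R_{n}(\bDelta)\}]$, and I would show this tends to $1$. With $|e^{-t}-1|\le|t|e^{|t|}$ and Lemma \ref{lemma:Rnbound}, applying the crude bound $\|\bDelta\|_{2}\le\epsilon_{n}$ only to the linear factor of the remainder gives $\tfrac{n}{2}|R_{n}(\bDelta)|\le\tfrac{n}{4}(p+s)(C_{1}\epsilon_{n}+C_{2}\epsilon_{n}^{2})\,\|\bDelta\|_{2}^{2}$; integrating the surviving $\|\bDelta\|_{2}^{2}$ against the Gaussian rather than bounding it by $\epsilon_{n}^{2}$, and using $\E_{G}\|\bDelta\|_{2}^{2}\le(1+o(1))\tr\{(\tfrac{n}{2}\bm{H}_{\bOmega^{*}})^{-1}\}=O((p+s)/n)$ from the eigenvalue bounds above, yields $\tfrac{n}{2}\E_{G}|R_{n}(\bDelta)|=O(n(p+s)\epsilon_{n}\cdot(p+s)/n)=O((p+s)^{2}\epsilon_{n})$, which is $o(1)$ precisely when $(p+s)^{2}\epsilon_{n}=n^{-1/2}(p+s)^{5/2}(\log p)^{1/2}\to0$. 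The spurious $e^{|t|}$ factor is handled by splitting $\{\|\bDelta\|_{2}\le\epsilon_{n}\}$ into the Gaussian bulk $\{n\|\bDelta\|_{2}^{2}\le K(p+s)\}$, on which $\tfrac{n}{2}|R_{n}|=O((p+s)^{2}\epsilon_{n})=o(1)$ so $e^{n|R_{n}|/2}=O(1)$, and its complement, which carries exponentially small Gaussian mass; the higher-order terms of the expansion of $\exp\{-\tfrac{n}{2}R_{n}\}$ are of strictly smaller order. Together with the localisation step this gives $p\{\bGamma\mid\bX^{(n)}\}/p^{*}\{\bGamma\mid\bX^{(n)}\}\to1$.

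\emph{Main obstacle.} I expect the crux to be the bookkeeping in the cubic-remainder step: the localisation radius $\epsilon_{n}$ carries an extra $(\log p)^{1/2}$ inherited from the posterior rate of Theorem \ref{theorem:postconvrate}, whereas the Gaussian moments of $\|\bDelta\|_{2}$ do not, so one must resist the naive uniform bound $\sup_{\|\bDelta\|_{2}\le\epsilon_{n}}\tfrac{n}{2}|R_{n}|=O((p+s)^{2}\epsilon_{n}\log p)$ and instead keep exactly one copy of $\|\bDelta\|_{2}^{2}$ inside the expectation, where $\E_{G}[n\|\bDelta\|_{2}^{2}]=O(p+s)$ replaces $n\epsilon_{n}^{2}=(p+s)\log p$; this is what produces the clean condition $(p+s)^{2}\epsilon_{n}\to0$. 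The ancillary points — uniformity of the bound of Lemma \ref{lemma:Rnbound} over the shrinking ball, positive-definiteness of $\bOmega^{*}+\bDelta$, and two-sided control of $\bm{H}_{\bOmega^{*}}$ — all reduce to the eigenvalue bounds (\ref{eqn:glassoeigbound}).
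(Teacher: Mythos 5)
Your proposal is correct and follows essentially the same route as the paper's proof: localize to $\{\|\bDelta\|_2 \le \epsilon_n\}$ via (\ref{eqn:postratio}), bound the Taylor remainder by $(p+s)\epsilon_n\|\bDelta\|_2^2/2$ using Lemma \ref{lemma:Rnbound}, and control the resulting perturbation of the Gaussian integral through the eigenvalue bound of Lemma \ref{lemma:Hesseigbound}. The only difference is in the final computation: the paper absorbs the remainder bound into the quadratic form and evaluates the determinant ratio $\left[\det\{\bm{H}_{\bOmega^*} \pm (p+s)\epsilon_n\bm{I}\}/\det(\bm{H}_{\bOmega^*})\right]^{-1/2}$, whereas you expand $\exp\{-nR_n/2\}$ and bound $\E_G\|\bDelta\|_2^2$; both yield the same condition $(p+s)^2\epsilon_n \rightarrow 0$.
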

The proof of the above result depends on several additional results, including Lemma \ref{lemma:Rnbound} involving the bound on the remainder term in the Taylor series expansion of $h(\bOmega)$. We give a proof of the above result along with these additional results in the appendix.

\section{Simulation results}

We perform a simulation study to assess the performance of the Bayesian method for graphical structure learning. We use 4 different models for our simulations, and we specify these models in terms of the elements of the covariance matrix $\bSigma = (\!(\sigma_{ij})\!)$ or the precision matrix $\bOmega = (\!(\omega_{ij})\!)$, as follows:
\begin{enumerate}
\item Model 1: AR(1) model, $\sigma_{ij} = 0.7^{|i-j|}$.
\item Model 2: AR(2) model, $\omega_{ii} = 1, \omega_{i,i-1} = \omega_{i-1,i} = 0.5, \omega_{i,i-2}=\omega_{i-2,i}=0.25$.
\item Model 3: Star model, where every node is connected to the first node, and $\omega_{ii} = 1, \omega_{1,i} = \omega_{i,1} = 0.1$, and $\omega_{ij}=0$ otherwise.
\item Model 4: Circle model, $\omega_{ii}=2, \omega_{i,i-1}=\omega_{i,i-1}=1, \omega_{1,p}=\omega_{p,1}=0.9$.
\end{enumerate}

Corresponding to each model, we generate samples of size $n = 100, 200$ and dimension $p = 30, 50, 100$. The penalty parameter for the graphical lasso algorithm is chosen to be 0.5 and the value of $q$ appearing in the prior of the graphical structure indicator to be 0.4. We run 100 replications for each of the models and find the median probability model for each replication. To assess the performance of the median probability model (denoted by `MPP'), we compute the specificity, sensitivity and Matthews Correlation Coefficient (MCC) averaged across the replications as defined below and also compute the same for the graphical lasso (denoted by `GL'). The results are presented in Table \ref{Ch4table:simu}.
\begin{eqnarray}
\mathrm{SP} &=&  \frac{\mathrm{TN}}{\mathrm{TN}+\mathrm{FP}},\, \mathrm{SE} =  \frac{\mathrm{TP}}{\mathrm{TP}+\mathrm{FN}} \nonumber \\
\mathrm{MCC} &=&  \frac{\mathrm{TP}\times \mathrm{TN} - \mathrm{FP}\times \mathrm{FN}}{\sqrt{(\mathrm{TP}+\mathrm{FP})(\mathrm{TP}+\mathrm{FN})(\mathrm{TN}+\mathrm{FP})(\mathrm{TN}+\mathrm{FN})}},
\end{eqnarray}
where TP, TN, FP and FN respectively denote the true positives, true negatives, false positives and false negatives in the selected model, which in our case is the median probability model.

\begin{sidewaystable}
\footnotesize 
\caption{Simulation results for different structures of precision matrices \label{Ch4table:simu}}
\begin{tabular}{ccccccccccccccccc}

  \hline
  & & \multicolumn{7}{c}{$n = 100$} &  & \multicolumn{7}{c}{$n = 200$} \\
    \cline{3-9} \cline {11-17}
    & & \multicolumn{3}{c}{MPP} & & \multicolumn{3}{c}{GL} & &  \multicolumn{3}{c}{MPP} & & \multicolumn{3}{c}{GL} \\
         \cline{3-5} \cline{7-9} \cline{11-13} \cline {15-17}
     Model &$p$ & SP & SE & MCC & & SP & SE & MCC & & SP & SE & MCC & & SP & SE & MCC \\
    \hline
&&&&&&&&&&&&&&&&\\
    & 30 & 0.977 & 0.941 & 0.831 & & 0.961 & 0.983 & 0.784 & & 0.986 & 0.996 & 0.907 & & 0.969 & 1.000 & 0.823 \\
    &    &(0.003)&(0.019)&(0.015)& &(0.003)&(0.010)&(0.013)& &(0.002)&(0.003)&(0.014)& &(0.002)&(0.000)&(0.013)\\
  AR(1)  & 50 & 0.987 & 0.953 & 0.841 & & 0.977 & 0.986 & 0.785 & & 0.991 & 0.992 & 0.903 & & 0.980 & 1.000 & 0.823\\
    &    &(0.002)&(0.013)&(0.010)& &(0.001)&(0.004)&(0.010)& &(0.001)&(0.004)&(0.008)& &(0.001)&(0.000)&(0.006) \\
    & 100& 0.992 & 0.967 & 0.837 & & 0.989 & 0.991 & 0.804 & & 0.994 & 0.995 & 0.890 & & 0.991 & 0.999 & 0.827 \\
    &    &(0.001)&(0.008)&(0.007)& &(0.001)&(0.003)&(0.006)& &(0.001)&(0.002)&(0.008)& &(0.001)&(0.001)&(0.006)\\
&&&&&&&&&&&&&&&&\\
    & 30 & 0.975 & 0.470 & 0.546 & & 0.964 & 0.535 & 0.558 & & 0.987 & 0.495 & 0.617 & & 0.982 & 0.517 & 0.610\\
    &    &(0.003)&(0.014)&(0.013)& &(0.002)&(0.013)&(0.012)& &(0.002)&(0.008)&(0.008)& &(0.002)&(0.009)&(0.007)\\
    AR(2)& 50 & 0.983 & 0.462 & 0.541 & & 0.971 & 0.508 & 0.522 & & 0.993 & 0.489 & 0.629 & & 0.987 & 0.534 & 0.622\\
    &    &(0.001)&(0.013)&(0.011)& &(0.002)&(0.010)&(0.009)& &(0.001)&(0.005)&(0.007)& &(0.001)&(0.001)&(0.006)\\
    & 100& 0.989 & 0.470 & 0.537 & & 0.980 & 0.531 & 0.514 & & 0.995 & 0.484 & 0.624 & & 0.993 & 0.529 & 0.624\\
    &    &(0.001)&(0.006)&(0.006)& &(0.001)&(0.007)&(0.007)& &(0.001)&(0.006)&(0.004)& &(0.001)&(0.009)&(0.005)\\
&&&&&&&&&&&&&&&&\\  
    & 30 & 0.947 & 0.289 & 0.228 & & 0.937 & 0.310 & 0.224 & & 0.995 & 0.210 & 0.378 & & 0.993 & 0.252 & 0.402\\
    &    &(0.004)&(0.038)&(0.036)& &(0.003)&(0.043)&(0.036)& &(0.001)&(0.032)&(0.041)& &(0.001)&(0.036)&(0.038)\\
    Star& 50 & 0.945 & 0.492 & 0.332 & & 0.934 & 0.514 & 0.317 & & 0.993 & 0.475 & 0.585 & & 0.990 & 0.514 & 0.577\\
    &    &(0.003)&(0.034)&(0.025)& &(0.003)&(0.035)&(0.023)& &(0.000)&(0.034)&(0.024)& &(0.001)&(0.032)&(0.022)\\
    & 100& 0.939 & 1.000 & 0.485 & & 0.927 & 1.000 & 0.452 & & 0.988 & 1.000 & 0.792 & & 0.984 & 1.000 & 0.748\\
    &    &(0.002)&(0.000)&(0.007)& &(0.002)&(0.000)&(0.005)& &(0.000)&(0.000)&(0.008)& &(0.001)&(0.000)&(0.007)\\
&&&&&&&&&&&&&&&&\\
    & 30 & 0.733 & 1.000 & 0.399 & & 0.694 & 1.000 & 0.369 & & 0.719 & 1.000 & 0.388 & & 0.674 & 1.000 & 0.354\\
    &    &(0.004)&(0.000)&(0.003)& &(0.006)&(0.000)&(0.004)& &(0.005)&(0.000)&(0.004)& &(0.004)&(0.000)&(0.003)\\
    Circle& 50 & 0.831 & 1.000 & 0.409 & & 0.822 & 1.000 & 0.398 & & 0.833 & 1.000 & 0.411 & & 0.814 & 1.000 & 0.390\\
    &    &(0.003)&(0.000)&(0.003)& &(0.002)&(0.000)&(0.003)& &(0.002)&(0.000)&(0.003)& &(0.002)&(0.000)&(0.002)\\
    & 100& 0.891 & 1.000 & 0.378 & & 0.894 & 1.000 & 0.383 & & 0.903 & 1.000 & 0.399 & & 0.902 & 1.000 & 0.397\\
    &    &(0.001)&(0.000)&(0.002)& &(0.001)&(0.000)&(0.002)& &(0.008)&(0.000)&(0.002)& &(0.001)&(0.000)&(0.002)\\
&&&&&&&&&&&&&&&&\\
    \hline
  \end{tabular}
\end{sidewaystable}

\section{Illustration with real data}
In this section we illustrate the Bayesian graphical structure learning method with the stock price data from Yahoo! Finance. Description of the data set can be found in \cite{liu2009nonparanormal} and available in the \texttt{huge} package on \texttt{CRAN} \citep{zhao2012huge} as \texttt{stockdata}. The data set consists of closing prices of stocks that were consistently included in the S\&P 500 index in the time period January 1, 2003 to January 1, 2008 for a total of 1258 days. The stocks are also categorized into 10 Global Industry Classification Standard (GICS) sectors, namely, ``Health Care", ``Materials", ``Industrials", ``Consumer Staples", ``Consumer Discretionary", ``Utilities", ``Information Technology", ``Financials", ``Energy", ``Telecommunication Services".

\par Denoting $Y_{tj}$ as the closing stock price for the $j$th stock on day $t$, we construct the $1257 \times 452$ data matrix $\bm{S}$ with entries $s_{tj} = \log(Y_{(t+1)j}/Y_{tj}),\, t = 1,\ldots,1257,\, j = 1,\ldots,452$. For analysis, we construct the data matrix $\bX$ by standardizing $\bm{S}$, so that each stock has mean zero and standard deviation one. We find the median probability model as selected by the Bayesian graphical structure learning method. The corresponding graphical structure is displayed in Figure \ref{graph1}. The vertices of the graph are colored corresponding to the different GICS sectors. We find that stocks from the same sectors tend to be related with other members from that category, and generally not related across different sectors, though there are some connections, which may be due to some other possible latent factors affecting all of them. The grouping of the stocks corresponding to their sectors is expected, implying that the stock prices for a particular sector are conditionally independent of those of other sectors.

\par We also individually study data pertaining to some of the specific sectors to have a closer look at the strength of the groupings where perturbations due to latent factors is least expected. For this, we consider the sectors ``Utilities" and ``Information Technology". The graphical structure is displayed in Figure \ref{graph2}. The stock prices for the two sectors clearly separate as desired.

\begin{figure}[h]
\label{graph1}
\centering
\includegraphics[width=4.5in, height = 4in]{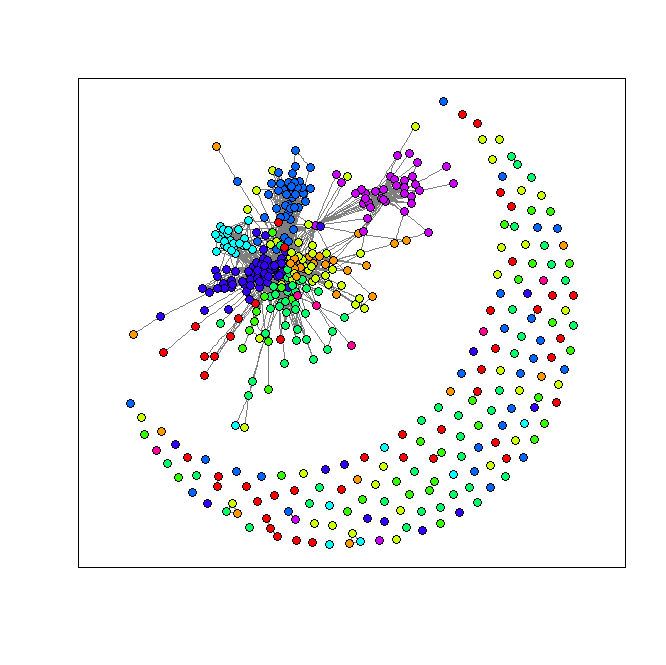}
\caption{Graphical structure of the median probability model selected by the Bayesian graphical structure learning method.}
\label{fig:graph1}
\end{figure}

\begin{figure}[h]
\label{graph2}
\centering
\includegraphics[width=4.5in, height = 4in]{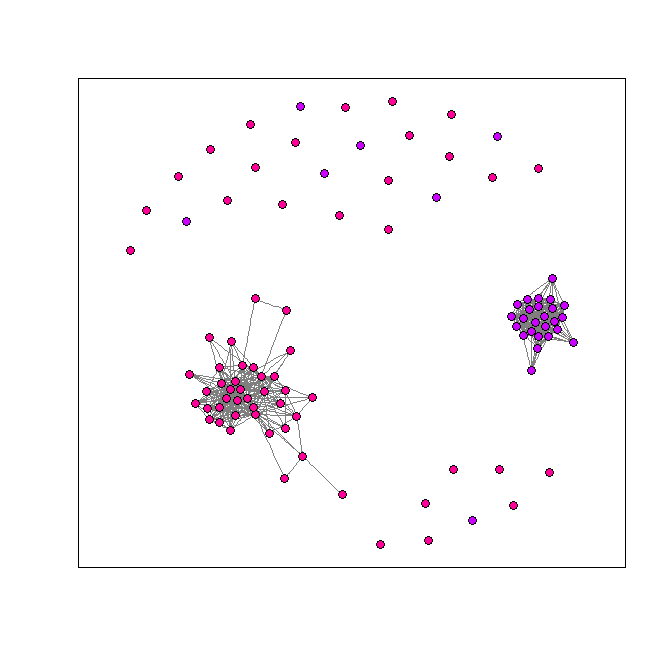}
\caption{Graphical structure corresponding to the subgraph corresponding to the sectors ``Utilities" [red] and ``Information Technology"[violet].}
\label{fig:graph2}
\end{figure}

\appendix

\section{Proofs} \label{app}

We now give a proof of the result on posterior convergence rate of the precision matrix.
\begin{proof}[Proof of Theorem \ref{theorem:postconvrate}]

In order to establish the rates of convergence of the posterior distribution, we first need to check the prior concentration rate, that is,
\begin{equation}
\Pi\left\{B(p_{\bOmega_0},\epsilon_n)\right\} := \Pi\left\{p: K(p_{\bOmega_0},p_{\bOmega}) \leq \epsilon_n^2, V(p_{\bOmega_0},p_{\bOmega}) \leq \epsilon_n^2\right\} \geq \exp(-bn\epsilon_n^2),
\end{equation}
where $K(p_{\bOmega_0},p_{\bOmega}) = \int p_{\bOmega_0}\log\left(p_{\bOmega_0}/p_{\bOmega}\right), \, V(p_{\bOmega_0},p_{\bOmega}) = \int p_{\bOmega_0}\left\{\log \left(p_{\bOmega_0}/p_{\bOmega}\right)\right\}^2$.
Note that, for $\bm{Z} \sim \mathrm{N}_p(\bm{0},\bSigma)$ and a $p \times p$ symmetric matrix $\bm{A}$, we have,
\begin{equation}
\label{eqn:quadform}
\E(\bm{Z}^T\bm{A}\bm{Z}) = \tr(\bm{A}\bSigma),\, \quad \mathrm{Var}(\bm{Z}^T\bm{A}\bm{Z}) = 2 \,\tr(\bm{A}\bSigma\bm{A}\bSigma).
\end{equation}
We use the above result to find the expressions for $K(p_{\bOmega_0},p_{\bOmega})$ and $V(p_{\bOmega_0},p_{\bOmega})$. Denoting the eigenvalues of the matrix $\bOmega_0^{-1/2}\bOmega\bOmega_0^{-1/2}$ by $d_{i},\, i=1,\ldots,p$, using $\tr(\bmA\bm{B}) = \tr(\bm{B}\bmA)$ and (\ref{eqn:quadform}), we get,
\begin{eqnarray}
K(p_{\bOmega_0},p_{\bOmega}) &=& \frac{1}{2}(\log \det \bOmega_0 - \log \det \bOmega) - \frac{1}{2}\tr(\bmI_p - \bOmega\bOmega_0^{-1}) \nonumber \\
&=& \frac{1}{2}(\log \det \bOmega_0 - \log \det \bOmega) - \frac{1}{2}\tr(\bmI_p - \bOmega_0^{-1/2}\bOmega\bOmega_0^{-1/2}) \nonumber \\
&=& -\frac{1}{2}\sum_{i=1}^p\log d_{i} - \frac{1}{2}\sum_{i=1}^{p}(1-d_{i}).
\end{eqnarray}
Now, $K(p_{\bOmega_0},p_{\bOmega}) \geq h^2(p_{\bOmega_0},p_{\bOmega})$, and from an argument in Lemma \ref{lemma:HellingervsFrob} it implies that if $K(p_{\bOmega_0},p_{\bOmega}) \leq \epsilon_n^2$, then $\mathop{\mathrm{max}}_i|d_{i} - 1| < 1$. Hence we can expand $\log d_i$ in the powers of $(1-d_i)$ to get
\begin{equation}
K(p_{\bOmega_0},p_{\bOmega}) \sim \frac{1}{4}\sum_{i=1}^p (1-d_{i})^2.
\end{equation}
Also, 
\begin{eqnarray}
V(p_{\bOmega_0},p_{\bOmega}) &=& \frac{1}{2}\tr(\bmI_p - 2\bOmega\bOmega_0^{-1}+ \bOmega\bOmega_0^{-1}\bOmega\bOmega_0^{-1}) \nonumber \\
&=& \tr(\bmI_p - 2\bOmega_0^{-1/2}\bOmega\bOmega_0^{-1/2} + \bOmega_0^{-1/2}\bOmega\bOmega_0^{-1}\bOmega\bOmega_0^{-1/2}) \nonumber \\
&=& \frac{1}{2}\tr(\bmI_p - \bOmega_0^{-1/2}\bOmega\bOmega_0^{-1/2})^2 \nonumber \\ 
&=& \frac{1}{2}\sum_{i=1}^p(1-d_{i})^2. \nonumber 
\end{eqnarray}
Thus,
\begin{equation}
\label{eqn:priorconc1}
\Pi\left\{p: K(p_{\bOmega_0},p_{\bOmega}) \leq \epsilon_n^2, V(p_{\bOmega_0},p_{\bOmega}) \leq \epsilon_n^2\right\}  \geq  \Pi\left\{\sum_{i=1}^p(1-d_{i})^2 \leq 8\epsilon_n^2\right\}.
\end{equation}
Now, using the assumptions on the true precision matrix $\bOmega_0$ and the matrix norm relations given by (\ref{eqn:matrixnorm}), we have,
\begin{eqnarray}
\label{eqn:priorconc2}
\sum_{i=1}^{p}(1-d_i)^2 &=& \|\bmI_p - \bOmega_0^{-1/2}\bOmega\bOmega_0^{-1/2}\|_2^2 \nonumber \\
&=& \|\bOmega_0^{-1/2}(\bOmega_0 - \bOmega)\bOmega_0^{-1/2}\|_2^2 \nonumber \\
& \leq & \|\bOmega_0^{-1}\|_{(2,2)}^2\|\bOmega_0 - \bOmega\|_2^2 \nonumber \\
& \leq & \varepsilon_0^{-2}\|\bOmega_0 - \bOmega\|_2^2.
\end{eqnarray}
Hence, equations (\ref{eqn:priorconc1}) and (\ref{eqn:priorconc2}), along with (\ref{eqn:matrixnorm}) give,
\begin{eqnarray}
\Pi\left\{p: K(p_{\bOmega_0},p_{\bOmega}) \leq \epsilon_n^2, V(p_{\bOmega_0},p_{\bOmega}) \leq  \epsilon_n^2\right\} &\geq& \Pi\left\{\|\bOmega_0 - \bOmega\|_2^2 \leq c\epsilon_n^2\right\} \nonumber \\
&\geq & \Pi\left(\|\bOmega_0 - \bOmega\|_{\infty} \leq c'\epsilon_n/p\right). \nonumber
\end{eqnarray}
The components of $\bOmega$ are not independently distributed, but a truncation applies because of the positive definite restriction. However, as the true $\bOmega_0$ lies in the set of positive definite matrices which is open, the truncation can only increase concentration in a small ball centered at the truth, so we can pretend componentwise independence for the purpose of lower bounding the above prior probability. This gives
\begin{equation}
\Pi\left(\|\bOmega_0 - \bOmega\|_{\infty} \leq c'\epsilon_n/p\right)  \gtrsim  \left(c'\epsilon_n/p\right)^{p+s}.
\end{equation}

The prior concentration rate condition thus gives,
\begin{equation}
(p+s)(\log p + \log \frac{1}{\epsilon_n}) \asymp n\epsilon_n^2,
\end{equation}
so as to get $\epsilon_n  = n^{-1/2}(p+s)^{1/2}(\log n)^{1/2}.$

Next, we need to construct tests for $H_0 :\bOmega = \bOmega_0$ against the alternative $H_1 : \|\bOmega_0 - \bOmega\|_2 \geq \epsilon_n$. Let $p_{\bOmega,n}(X_1,\ldots,X_n)$ denote the joint density of the observations and let $\Pi$ be the prior. From the results of \cite{birge1984} and \cite{lecamasymptotic}, we know that for a probability measure $P_0$ and any convex set $\mathcal{P}$ of probability measures, there exists tests $\phi_n$ such that
\begin{equation}
\label{eqn:birgelecam}
P_0^n \phi_n \leq e^{-nh^2(P_0, \mathcal{P})/2}, \mathop{\mathrm{sup}}_{P \in \mathcal{P}} P^n (1 - \phi_n)  \leq e^{-nh^2(P_0, \mathcal{P})/2},
\end{equation}
where $h^2(P_0, \mathcal{P}) = \mathrm{min}\{h^2(P_0,P):P \in \mathcal{P}\}.$

Let $\mathcal{P}_2 = \{p_{\bOmega_2}:\|\bOmega_2 - \bOmega_1\|_2 \leq c_0^{-1/2}\epsilon_n/2\}$, where $c_0$ is the constant appearing in Lemma \ref{lemma:HellingervsFrob} and $h(p_{\bOmega_0},p_{\bOmega_1}) \geq \epsilon_n$. We claim that $h(p_{\bOmega},p_{\bOmega_0}) > \epsilon_n/2$ for any $p_{\bOmega}$ in the convex hull $\mathrm{conv}(\mathcal{P}_2)$ of $\mathcal{P}_2$. To see this, represent $p_{\bOmega}$ as
\begin{equation}
p_{\bOmega} = \int_{\bOmega_2:\|\bOmega_2 - \bOmega_1\|_2 \leq c_0^{-1/2}\epsilon_n/2} p_{\bOmega_2}\,d\Phi(\bOmega_2),
\end{equation}
where $\Phi$ is an arbitrary probability measure on $c_0^{-1/2}\epsilon_n/2$-ball around $\bOmega_1$ in terms of Frobenius distance.
Then, for any $ p_{\bOmega} \in \mathrm{conv}(\mathcal{P}_2)$, by Lemma \ref{lemma:HellingervsFrob} and the convexity of the squared Hellinger distance,
\begin{equation}
h^2(p_{\bOmega},p_{\bOmega_1}) \leq  \int_{\bOmega_2:\|\bOmega_2 - \bOmega_1\|_2 \leq c_0^{-1/2}\epsilon_n/2} h^2(p_{\bOmega_1},p_{\bOmega_2})\,d\Phi(\bOmega_2) < \epsilon_n^2/4.
\end{equation}
This implies that $h\{p_{\bOmega_0}, \mathrm{conv}(\mathcal{P}_2)\} > \epsilon_n/2$ by the triangle inequality. Thus, by (\ref{eqn:birgelecam}), we can find tests for $\bOmega=\bOmega_0$ vs. $\bOmega \in \mathcal{P}_2$ such that the error probabilities are bounded by $\exp(-n\epsilon_n^2/8)$.

In order to get a test for $H_0$ vs. $H_1$ with similar error probability, we also need to cover the alternative with balls of size $\epsilon_n/2$ and satisfy the metric entropy condition
\begin{equation}
\log N(\epsilon_n/2, \mathcal{P}_n,\|\cdot \|'_2) \leq c_1n\epsilon_n^2,
\end{equation}
where $\|\cdot\|'_2$ is the distance on $p_{\bOmega}$ induced by $\|\cdot\|_2$ on $\bOmega$, $c_1 > 0$ is a constant and $\mathcal{P}_n \subset \mathcal{P}$ is a suitable subset of $\mathcal{P}$, called a sieve, such that $\Pi(\mathcal{P}_n^c)$ is exponentially small. For a graph $G$ with $p$ vertices, consider the sieve $\mathcal{P}_n$ to be the space of all densities $p_{\bOmega}$ such that the graph corresponding to $\bOmega$ has maximum number of edges $\bar{r} < \binom{p}{2}/2$ and each off-diagonal entry of $\bOmega$ is at most $L$. Then the metric entropy condition is given by
\begin{equation}
\label{eqn:metricentropy}
\log \left\{\sum_{j=1}^{\bar{r}}\left(\frac{L}{\epsilon_n}\right)^j\binom{\binom{p}{2}}{j}\right\} \leq \log \left\{\bar{r}\left(\frac{L}{\epsilon_n}\right)^{\bar{r}}\binom{\binom{p}{2}}{\bar{r}}\right\},
\end{equation}
where we choose $L \in [b_2n\epsilon_n^2,b_2n\epsilon_n^2+1]$ to ensure that $\binom{p}{2}\exp(-L) \leq \exp(-b_3n\epsilon_n^2)$, for some constants $b_2$ and $b_3$, and that $b_3$ can be made as large as we want by making $b_2$ larger. Thus the best solution of (\ref{eqn:metricentropy}) leads to the relation
\begin{equation}
\label{eqn:barrrate}
\log \bar{r} + \bar{r}\log p + \bar{r} \log (\frac{1}{\epsilon_n}) + \bar{r} \log (n\epsilon_n^2) \asymp n\epsilon_n^2,
\end{equation}
which is satisfied if we choose $\bar{r} = b_1n\epsilon_n^2/\log n$, $b_1$ large. Also, for this choice of $\bar{r}$, we have,
\begin{equation}
\label{eqn:Rbound}
\P(\bar{R} > \bar{r}) \leq \exp(-a_2'b_1n\epsilon_n^2),
\end{equation}
where $a_2'b_1$ can be made as large as possible by making $b_1$ large. 
For the bound on the prior probability of the complement $\mathcal{P}_n^c$ of the above sieve, we have, using the condition on prior (\ref{prior:gamma1}),
\begin{equation}
\label{eqn:sievecomplement}
\Pi(\mathcal{P}_n^c) \leq \P(\bar{R} > \bar{r}) + \exp(-b_3n\epsilon_n^2).
\end{equation}
For the prior (\ref{prior:gamma2}), the first term in (\ref{eqn:sievecomplement}) is exactly zero, and for the prior (\ref{prior:gamma1}), from equation (\ref{eqn:Rbound}),
\begin{equation}
\Pi(\mathcal{P}_n^c) \leq \exp(-c_3n\epsilon_n^2),
\end{equation}
where $c_3$ is a constant which can be made as large as we please by making $b_1, b_3$ larger. Note that under the condition $n\epsilon_n^2/\log n \ll \binom{p}{2}$, the requirement $\bar{r} < \binom{p}{2}/2$ is satisfied as $n \rightarrow \infty$. Hence $\epsilon_n$ as found above is the desired posterior convergence rate.
\end{proof}

The following lemma establishes a norm equivalence necessary for finding posterior convergence rate and metric entropy calculations.

\begin{lemma}
\label{lemma:HellingervsFrob}
If $p_{\bOmega_k}$ is the density of $\mathrm{N}_p(\bm{0},\bOmega_k^{-1}),\, k = 1,2$, then for all $\bOmega_k \in \mathcal{U}(\varepsilon_0,s), \, k = 1,2$,
\begin{enumerate}[(i)]
\item $c_0^{-1}\|\bOmega_1 - \bOmega_2\|_2^2 \leq h^2(p_{\bOmega_1},p_{\bOmega_2})$,\, when\, $\|\bOmega_1 - \bOmega_2\|_2 < \varepsilon_0$,
\item $h^2(p_1,p_2) \leq c_0\|\bOmega_1 - \bOmega_2\|_2^2$,
\end{enumerate}
for some universal constant $c_0 > 0$.
\end{lemma}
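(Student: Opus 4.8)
The plan is to reduce the Hellinger distance to a one-dimensional problem in the spectrum of $\bOmega_1^{-1/2}\bOmega_2\bOmega_1^{-1/2}$. First I would invoke the classical closed form for the Hellinger affinity of two centred Gaussians: writing $\bSigma_k=\bOmega_k^{-1}$ and letting $d_1,\dots,d_p$ be the eigenvalues of $\bOmega_1^{-1/2}\bOmega_2\bOmega_1^{-1/2}$,
\begin{equation*}
\rho:=\int\sqrt{p_{\bOmega_1}p_{\bOmega_2}}=\frac{\{\det\bSigma_1\}^{1/4}\{\det\bSigma_2\}^{1/4}}{\{\det((\bSigma_1+\bSigma_2)/2)\}^{1/2}}=\prod_{i=1}^{p}\psi(d_i)^{1/2},\qquad \psi(d):=\frac{2\sqrt d}{1+d},
\end{equation*}
the product form being obtained by factoring $\bSigma_1^{1/2}$ out of each determinant; hence $h^2(p_{\bOmega_1},p_{\bOmega_2})=2(1-\rho)$ and $-\log\rho=\tfrac12\sum_{i}(-\log\psi(d_i))$.

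Next I would use the identity $\psi(d)=1-(1-\sqrt d)^2/(1+d)$ together with the eigenvalue bounds imposed by membership in $\mc{U}(\ve_0,s)$: since $\ve_0\le\eig(\bOmega_k)\le\ve_0^{-1}$ one has $d_i\in[\ve_0^{2},\ve_0^{-2}]$ for every $i$, hence $\psi(d_i)\ge\psi_{\min}:=2\ve_0/(1+\ve_0^{2})>0$ and
\begin{equation*}
\frac{(1-d_i)^2}{(1+\ve_0^{-2})(1+\ve_0^{-1})^2}\le 1-\psi(d_i)\le(1-d_i)^2 .
\end{equation*}
Combining this with $1-\psi(d_i)\le-\log\psi(d_i)\le(1-\psi(d_i))/\psi_{\min}$ yields $-\log\rho\asymp\sum_i(1-d_i)^2=\|\bm{I}_p-\bOmega_1^{-1/2}\bOmega_2\bOmega_1^{-1/2}\|_2^2=\|\bOmega_1^{-1/2}(\bOmega_1-\bOmega_2)\bOmega_1^{-1/2}\|_2^2$, and sandwiching this last quantity between $\|\bOmega_1\|_{(2,2)}^{-2}\|\bOmega_1-\bOmega_2\|_2^2$ and $\|\bOmega_1^{-1}\|_{(2,2)}^{2}\|\bOmega_1-\bOmega_2\|_2^2$ by the submultiplicativity in $(\ref{eqn:matrixnorm})$ gives $-\log\rho\asymp\|\bOmega_1-\bOmega_2\|_2^2$, with constants depending only on $\ve_0$ (the sparsity index $s$ is never used).

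Finally I would pass from $-\log\rho$ to $h^2=2(1-\rho)$. For (ii) no restriction is needed: $1-\rho\le-\log\rho$, so $h^2\le2(-\log\rho)\le c_0\|\bOmega_1-\bOmega_2\|_2^2$. For (i), the hypothesis $\|\bOmega_1-\bOmega_2\|_2<\ve_0$ forces $\sum_i(1-d_i)^2=\|\bOmega_1^{-1/2}(\bOmega_1-\bOmega_2)\bOmega_1^{-1/2}\|_2^2\le\ve_0^{-2}\ve_0^{2}=1$, whence $-\log\rho\le K$ for a constant $K=K(\ve_0)$; since $x\mapsto(1-e^{-x})/x$ is decreasing, $1-\rho\ge\frac{1-e^{-K}}{K}(-\log\rho)$, and the lower estimate of the previous step completes (i). The one delicate point — and the reason (i) carries the hypothesis $\|\bOmega_1-\bOmega_2\|_2<\ve_0$ while (ii) does not — is that $h^2=2(1-\rho)$ and the additive quantity $-\log\rho$ are comparable only when $\rho$ is bounded away from $0$; since $-\log\rho$ is a sum of $p$ terms this may fail for large Frobenius distance, where $1-\rho$ stays of order one while $\|\bOmega_1-\bOmega_2\|_2^2$ can be of order $p$. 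The one-sided inequality $1-\rho\le-\log\rho$ bypasses this for (ii), and the restriction is exactly what keeps $-\log\rho$ uniformly bounded for (i); throughout one must check that every constant depends on $\ve_0$ alone and not on $n,p,s$, which holds because each factor $\psi(d_i)$ is controlled uniformly over $d_i\in[\ve_0^{2},\ve_0^{-2}]$ before the sum is taken.
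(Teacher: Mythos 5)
Your proof is correct, and it follows the same overall skeleton as the paper's: reduce to the eigenvalues $d_i$ of $\bOmega_1^{-1/2}\bOmega_2\bOmega_1^{-1/2}$, show the Hellinger quantity is comparable to $\sum_i(1-d_i)^2$, and then sandwich $\sum_i(1-d_i)^2$ against $\|\bOmega_1-\bOmega_2\|_2^2$ via the operator norms of $\bOmega_1$ and $\bOmega_1^{-1}$. Where you genuinely diverge is in the middle step. The paper compares $1-\rho$ to $\sum_i(1-d_i)^2$ by Taylor-expanding the product $\prod_i\tfrac12(d_i^{1/2}+d_i^{-1/2})=1+O\{\sum_i(1-d_i)^2\}$, which requires first establishing $|d_i-1|<1$ (hence the preliminary step in the paper's proof) and leaves the uniformity of the $O(\cdot)$ constant over a product of $p$ factors somewhat implicit. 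You instead pass through $-\log\rho=\tfrac12\sum_i(-\log\psi(d_i))$, which is exactly additive, and control each summand by the elementary two-sided bounds $1-x\le-\log x\le(1-x)/\psi_{\min}$ together with the uniform localization $d_i\in[\varepsilon_0^2,\varepsilon_0^{-2}]$ coming from membership in $\mathcal{U}(\varepsilon_0,s)$. This buys two things: the constants are explicit and manifestly depend on $\varepsilon_0$ alone, and the upper bound (ii) comes out with no smallness restriction at all via the one-sided inequality $1-\rho\le-\log\rho$ (the paper's argument for (ii) also nominally assumes $\|\bOmega_1-\bOmega_2\|_2\le\delta$ with $\delta$ small enough for the expansion). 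Your final step converting $-\log\rho$ back to $1-\rho$ for the lower bound (i), using monotonicity of $x\mapsto(1-e^{-x})/x$ on the region where $-\log\rho\le K(\varepsilon_0)$, is exactly where the hypothesis $\|\bOmega_1-\bOmega_2\|_2<\varepsilon_0$ enters, and your explanation of why it is needed there and not in (ii) is accurate.
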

\begin{proof}
Let $d_{i},\, i=1,\ldots,p$ be the eigenvalues of the matrix $\bmA = \bOmega_1^{-1/2}\bOmega_2\bOmega_1^{-1/2}$. Half squared Hellinger distance between $p_1$ and $p_2$ is given by
\begin{equation}
\label{eqn:hellingerp1p2}
1- \frac{\left\{\det(\bmA)\right\}^{-1/4}}{\left(\det\left[\frac{1}{2}\{\bmI + \bmA^{-1}\}\right]\right)^{1/2}} = 1 - \frac{\prod_{i=1}^p d_{i}^{-1/4}}{\{\prod_{i=1}^p\frac{1}{2}(1 + d_{i}^{-1})\}^{1/2}},
\end{equation}
and the Frobenius norm of the difference between $\bOmega_1$ and $\bOmega_2$ is given by, from (\ref{eqn:matrixnorm}),
\begin{eqnarray}
\label{eqn:Frobp1p2}
\|\bOmega_1 - \bOmega_2\|_2^2 &=& \|\bOmega_1^{1/2}(\bmI_p - \bmA)\bOmega_1^{1/2}\|_2^2 \nonumber \\
&\leq & \|\bOmega_1\|_{(2,2)}^2\|\bmI_p - \bmA\|_2^2\nonumber \\
&=& \|\bOmega_1\|_{(2,2)}^2 \mathrm{tr}\left(\bmI_p - \bmA\right)^2 \nonumber \\
&=& \|\bOmega_1\|_{(2,2)}^2 \sum_{i=1}^p(d_{i}-1)^2 \nonumber \\
&\leq & \varepsilon_0^{-2} \sum_{i=1}^p(d_{i}-1)^2.
\label{eqn:Frobbound}
\end{eqnarray}

First we show that either $\|\bOmega_1 - \bOmega_2\|_2^2 \leq \delta^2$ or $h^2(p_{\bOmega_1},p_{\bOmega_2}) \leq 2\delta^2$ implies $|d_{i} - 1| < 1$ for all $i = 1,\ldots, p$ for sufficiently small $\delta$. This is necessary to expand $d_i$ in powers of $(1-d_i)$. Let us consider the case $\|\bOmega_1 - \bOmega_2\|_2^2 \leq \delta^2$. Then,
\begin{eqnarray}
\mathop{\max}_{i} |d_{i} - 1| &=&  \|\bmA - \bmI_p\|_{(2,2)} \nonumber \\
&=& \|\bOmega_1^{-1/2}(\bOmega_2 - \bOmega_1)\bOmega_1^{-1/2}\|_{(2,2)} \nonumber \\
&\leq & \|\bOmega_1^{-1}\|_{(2,2)}\|\bOmega_2 - \bOmega_1\|_{(2,2)} \nonumber \\
&\leq & \varepsilon_0^{-1}\delta < 1.
\end{eqnarray}
Now let $h^2(p_{\bOmega_1},p_{\bOmega_2}) \leq 2\delta^2$. This implies $1 - \{\prod_{i=1}^p\frac{1}{2}(d_{i}^{1/2} + d_{i}^{-1/2})\}^{-1/2} \leq \delta^2$. Rearranging the terms, we get,
$\prod_{i=1}^{p}\frac{1}{2}(d_{i}^{1/2} + d_{i}^{-1/2}) \leq (1 - \delta^2)^{-2} = 1 + \eta,\, \mathrm{say}$. Since every term in the product exceeds 1, we have,
\begin{equation}
\label{eqn:dibound}
\mathop{\max}_{i}\frac{1}{2}(d_{i}^{1/2} + d_{i}^{-1/2}) \leq 1 + \eta.
\end{equation}
The above equation, upon squaring and rearrangement of terms, gives, for all $i$,
\begin{equation}
(d_{i} - 1)^2 \leq 2d_{i}^{1/2}\eta.
\end{equation}
Note that equation (\ref{eqn:dibound}) gives that $ d_{i}^{1/2} \leq 2(1+\eta)$. Hence, the above equation implies that $(d_{i} - 1)^2 \leq 4\eta(1+\eta)$. Choose $\eta < (\sqrt{2}-1)/2$ so that we get $|d_{i} - 1| < 1$ for all $i=1,\ldots,p$. 

Now, let us assume that $\frac{1}{2}h^2(p_{\bOmega_1},p_{\bOmega_2}) \leq \delta^2$, for some $\delta > 0$. This implies, from equation (\ref{eqn:hellingerp1p2}),
\begin{equation*}
\prod_{i=1}^p(d_{i}^{1/2}+d_{i}^{-1/2}) \leq 2^p(1-\delta^2)^{-2}.
\end{equation*}
Now, $\prod_{i=1}^p(d_{i}^{1/2}+d_{i}^{-1/2}) = 2^p[1+O\{\sum_{i=1}^p(d_{i}-1)^2\}]$ using Taylor's series expansion. Then, from the above equation after rearrangement of the terms, we get, $1 + O\left\{\sum_{i=1}^p(d_{i}-1)^2\right\} \leq (1-\delta^2)^{-2} \sim 1 + 2\delta^2$, so that,
\begin{equation}
\sum_{i=1}^p(d_{i}-1)^2 \leq c_0\delta^2,\, \mbox{for some} \, c_0 > 0.
\end{equation}
Now, equation (\ref{eqn:Frobbound}) gives that $\|\bOmega_1 - \bOmega_2\|_2^2 \leq \|\bOmega_1\|^2_{(2,2)}\sum_{i=1}^p(1 - d_{i})^2$. Choosing $\delta = h(p_{\bOmega_1},p_{\bOmega_2})$, the first inequality follows.

To show the other way round, assume that $\|\bOmega_1 - \bOmega_2\|_2^2 \leq \delta^2$. Then,
\begin{eqnarray}
\frac{1}{2}h^2(p_{\bOmega_1},p_{\bOmega_2}) &=& 1 - \frac{\prod_{i=1}^p d_{i}^{-1/4}}{\{\prod_{i=1}^p\frac{1}{2}(1 + d_{i}^{-1})\}^{1/2}} \nonumber \\
&=& 1 - \frac{1}{[1 + O\left\{\sum_{i=1}^p(d_{i}-1)^2\right\}]^{1/2}}
 \nonumber \\
&= & O\left\{\sum_{i=1}^p(d_{i}-1)^2\right\} \leq c\delta^2,\, \mbox{for some} \, c > 0.
\end{eqnarray}
Thus, if $\|\bOmega_1 - \bOmega_2\|_2^2 \leq \delta^2$, then  $h^2(p_{\bOmega_1},p_{\bOmega_2}) \leq c\delta^2$ for some $c > 0$.
\end{proof}

We now give a proof of the result on the graphical lasso solutions being identical in case of a regular submodel for a non-regular model.
\begin{proof}[Proof of Lemma \ref{lemma:nonregular}]
The graphical lasso solution for the model $\bGamma$, given by $\bOmega^*_{\bGamma} =  (\!(\omega^*_{\bGamma,ij})\!)$ satisfies, by the Karush-Kuhn-Tucker (KKT) condition; see, for example, \cite{boyd2004convex}, \cite{witten2011new});
\begin{equation}
\bOmega^{*-1}_{\bGamma} - \widehat{\bSigma} - \lambda\bm{G} = 0,
\end{equation}
where $\bm{G}$ is a matrix with elements
\begin{equation}
\bm{G}_{ij} = \begin{cases}  \omega^*_{\bGamma,ij}/|\omega^*_{\bGamma,ij}| &\mbox{if } \omega^*_{\bGamma,ij} \neq 0 \\
g_{ij} \in [-1,1] & \mbox{if } \omega^*_{\bGamma,ij} = 0. \end{cases}
\end{equation}
For a non-regular model $\bGamma$, consider the non-zero elements $\omega^*_{\bGamma,ij}$ of the graphical lasso solution. Corresponding to the submodel $\Gamma'$ of $\Gamma$, we construct a matrix $\bOmega^*_{\bGamma'} =  (\!(\omega_{\bGamma',ij})\!)$ such that,
\begin{equation}
\omega^*_{\bGamma',ij} = \begin{cases} \omega^*_{\bGamma,ij} &\mbox{if } \omega^*_{\bGamma,ij} \neq 0 \\
0 & \mbox{otherwise}. \end{cases}
\end{equation}
Then, $\bOmega^*_{\bGamma'}$ satisfies the KKT condition corresponding to the model $\bGamma'$, and hence $\bOmega^*_{\bGamma'}$ is a graphical lasso solution for $\bGamma'$. But the construction of the above solution gives that $\bOmega^*_{\bGamma} = \bOmega^*_{\bGamma'}$. This completes the proof.
\end{proof}

The following lemma is essential in proving the ignorability of the non-regular models for posterior probability evaluation.
\begin{lemma}
\label{lemma:Nonregularitylemma}
Consider a non-regular model $\bGamma$ with the corresponding regular submodel $\bGamma'$, having identical graphical lasso estimate given by $\bOmega^*$. For $\bDelta_{\bGamma} = (\!(u_{\bGamma,ij})\!)$ as defined in Section \ref{subsec:nonregular}, for fixed values of $u_{\bGamma,ij} \in \{u_{\bGamma,ij} : \gamma_{ij} = \gamma'_{ij} = 1, \|\bDelta_{\bGamma}\|_2 \leq \epsilon_n\}$, we have,
\begin{equation}
\log \mathrm{det}(\bDelta_{\bGamma} + \bOmega^*) - \mathrm{tr}(\widehat{\bSigma}\bDelta_{\bGamma}) \leq \log \mathrm{det}(\bDelta_{\bGamma'} + \bOmega^*) - \mathrm{tr}(\widehat{\bSigma}\bDelta_{\bGamma'}).
\end{equation}
\end{lemma}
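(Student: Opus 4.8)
The plan is to reduce the claimed inequality to a concavity statement about the smooth part of the graphical--lasso objective and then to feed in the stationarity (KKT) characterization of $\bOmega^*$.

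First I would set up coordinates. Let $S=\{(i,j):\gamma_{ij}=1,\ \gamma'_{ij}=0\}$ be the $r=\#\bGamma-\#\bGamma'$ positions present in $\bGamma$ but not in $\bGamma'$; by hypothesis $\omega^*_{ij}=0$ for every $(i,j)\in S$. Since $\bDelta_{\bGamma'}$ is $\bDelta_{\bGamma}$ with these entries set to zero, write
\[
\bOmega^*+\bDelta_{\bGamma}=\bm{M}+\bm{R},\qquad \bm{M}:=\bOmega^*+\bDelta_{\bGamma'},\quad \bm{R}:=\sum_{(i,j)\in S}u_{\bGamma,ij}\,\bm{E}_{(i,j)},
\]
so that $\bm{R}$ is supported on $S$ and $\tr(\widehat{\bSigma}\bDelta_{\bGamma})=\tr(\widehat{\bSigma}\bDelta_{\bGamma'})+\tr(\widehat{\bSigma}\bm{R})$. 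Hence the asserted inequality is exactly $F(\bm{R})\le F(\bm{0})$, where $F(\bm{R}):=\log\det(\bm{M}+\bm{R})-\tr(\widehat{\bSigma}\bm{R})$ is viewed as a function of the free entries $\{u_{\bGamma,ij}:(i,j)\in S\}$ on the convex region $\{\bm{M}+\bm{R}\succ\bm{0}\}$; equivalently, $F$ attains its maximum over the $S$-directions at $\bm{R}=\bm{0}$.

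Next I would invoke concavity. The map $\bm{R}\mapsto\log\det(\bm{M}+\bm{R})$ is concave and $\bm{R}\mapsto\tr(\widehat{\bSigma}\bm{R})$ is linear, so $F$ is concave, and the supporting--hyperplane inequality gives $F(\bm{R})\le F(\bm{0})+\tr\{(\bm{M}^{-1}-\widehat{\bSigma})\bm{R}\}$, since the derivative of $F$ at $\bm 0$ is $\bm{M}^{-1}-\widehat{\bSigma}$. Thus it suffices to control the first--order term $\tr\{(\bm{M}^{-1}-\widehat{\bSigma})\bm{R}\}=\sum_{(i,j)\in S}2(\bm{M}^{-1}-\widehat{\bSigma})_{ij}\,u_{\bGamma,ij}$, and, if a purely first--order bound does not suffice, to offset it against the strictly negative second--order contribution of the Hessian of $F$, namely $-\bm{H}_{\bm{M}}$ restricted to the $S$-coordinates, which is uniformly negative definite because $\bm{M}=\bOmega^*+O_P(\epsilon_n)$ is well conditioned by (\ref{eqn:glassoeigbound}).

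The entries $(\bm{M}^{-1}-\widehat{\bSigma})_{ij}$ for $(i,j)\in S$ are where the graphical lasso enters. Since $\bOmega^*$ is the graphical lasso solution for $\bGamma$ and $\omega^*_{ij}=0$ on $S$, the KKT condition $\bOmega^{*-1}-\widehat{\bSigma}=\lambda\bm{G}$ from the proof of Lemma \ref{lemma:nonregular} yields $|(\bOmega^{*-1}-\widehat{\bSigma})_{ij}|=\lambda|G_{ij}|\le\lambda$ on $S$; combining this with $\bm{M}^{-1}-\bOmega^{*-1}=-\bm{M}^{-1}(\bm{M}-\bOmega^*)\bOmega^{*-1}$, with $\|\bDelta_{\bGamma'}\|_2\le\|\bDelta_{\bGamma}\|_2\le\epsilon_n$, and with (\ref{eqn:glassoeigbound}) bounds each $(\bm{M}^{-1}-\widehat{\bSigma})_{ij}$ on $S$, hence the first--order term. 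The main obstacle is precisely this first--order term: unlike in the regular (differentiable) case its coefficients need not vanish, because on $S$ the dual variables $G_{ij}$ are only constrained to lie in $[-1,1]$ and $\bm{M}\neq\bOmega^*$. Carrying the argument through therefore forces one to use the sign structure of the graphical--lasso dual on the positions of $S$ together with the quadratic decay of $F$ in those directions; this is the delicate step, and it is where the bound $\|\bDelta_{\bGamma}\|_2\le\epsilon_n$ — and, if needed, a closer look at how the estimate is subsequently consumed in the integral of Theorem \ref{theorem:nonregular} — has to be exploited.
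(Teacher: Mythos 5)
Your reduction is the right one and in substance the same as the paper's: you rewrite the claim as $F(\bm{R})\le F(\bm{0})$ for $F(\bm{R})=\log\det(\bm{M}+\bm{R})-\tr(\widehat{\bSigma}\bm{R})$, with $\bm{M}=\bOmega^*+\bDelta_{\bGamma'}$ and $\bm{R}$ supported on the positions $S$ where $\omega^*_{ij}=0$, invoke concavity of $\log\det$, and observe that everything hinges on the linear term $\tr\{(\bm{M}^{-1}-\widehat{\bSigma})\bm{R}\}$. But you stop exactly there. The final paragraph says this term ``has to be exploited'' via the sign structure of the graphical-lasso dual and the quadratic decay of $F$, without carrying out that step. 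Since showing that the linear term is dominated --- equivalently, that $\bm{R}=\bm{0}$ maximizes $F$ over the $S$-directions on the relevant set --- \emph{is} the entire content of the lemma, what you have written is an accurate reduction together with an explicit admission that the decisive step is missing. That is a genuine gap, not a proof.

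For comparison, the paper closes this step by a coordinate-wise stationarity argument rather than a global concavity bound: it writes the first-order condition $\tr\{((\bDelta_{\bGamma}+\bOmega^*)^{-1}-\widehat{\bSigma})\bm{E}_{(i,j)}\}=0$ for the maximizer of $f$ in the coordinate $u_{\bGamma,ij}$, $(i,j)\in S$, combines it with the two one-sided derivative conditions of the penalized objective $g$ at $u_{\bGamma,ij}=0^{\pm}$ (which encode $|\tr\{(\bOmega^{*-1}-\widehat{\bSigma})\bm{E}_{(i,j)}\}|\le 2\lambda/n$ on $S$), and concludes that the maximizer is $\widehat{u}_{\bGamma,ij}=0$, whence $f(\bDelta_{\bGamma})\le f(\bDelta_{\bGamma'})$. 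Your worry that the gradient of $F$ at $\bm{0}$ need not vanish when $\bm{M}\ne\bOmega^*$ is not an idle one --- the paper's subgradient bounds are evaluated at $\bDelta_{\bGamma}=\bm{0}$ and its conclusion is stated under the proviso that the derivative of $f$ be continuous at $0$ --- but identifying the delicate point is not the same as resolving it. To turn your proposal into a proof you must actually show that the bound $|(\bm{M}^{-1}-\widehat{\bSigma})_{ij}|\lesssim\lambda/n$ on $S$, together with the strictly negative-definite Hessian $-\bm{H}_{\bm{M}}$ restricted to the $S$-coordinates, forces $F(\bm{R})\le F(\bm{0})$ over the stated range of $\bm{R}$, or else reproduce the paper's maximizer argument.
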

\begin{proof}
Consider maximization of the function
\begin{equation}
f(\bDelta_{\bGamma}) = \log \det (\bDelta_{\bGamma} + \bOmega^*) - \tr(\widehat{\bSigma}\bDelta_{\bGamma}).
\end{equation}
with respect to the elements $u_{\bGamma,ij}$ where $(i,j) \in \{(i,j): \gamma_{ij} = 1, \gamma'_{ij} = 0 \}$. Differentiating the above function for a particular value of $u_{ij}$ gives,
\begin{equation}
\frac{\partial f(\bDelta_{\bGamma})}{\partial u_{\bGamma,ij}} = \tr\left[\left\{(\bDelta_{\bGamma} + \bOmega^*)^{-1}\bm{E}_{(i,j)} - \widehat{\bSigma}\bm{E}_{(i,j)}\right\}\right].
\end{equation}
The maximizer $\widehat{u}_{\bGamma,ij}$ satisfies $\tr\left[\left\{(\bDelta_{\bGamma} + \bOmega^*)^{-1}\bm{E}_{(i,j)} - \widehat{\bSigma}\bm{E}_{(i,j)}\right\}\right] = 0$.
Now consider the function $g(\bDelta_{\bGamma})$ as defined earlier. The derivative of $g(\bDelta_{\bGamma})$ with respect to $u_{\bGamma,ij}$ satisfies
\begin{equation}
\left. \frac{\partial g(\bDelta_{\bGamma})}{\partial u_{\bGamma,ij}}\right|_{u_{\bGamma,ij} = 0^+, u_{\bGamma,lm}=0, \forall (l,m)\neq (i,j)} \geq 0,
\end{equation}
and,
\begin{equation}
\left. \frac{\partial g(\bDelta_{\bGamma})}{\partial u_{\bGamma,ij}}\right|_{u_{\bGamma,ij} = 0^-, u_{\bGamma,lm}=0, \forall (l,m)\neq (i,j)} \leq 0.
\end{equation}
The above two conditions give,
\begin{eqnarray*}
\left.\tr\left[\left\{(\bDelta_{\bGamma} + \bOmega^*)^{-1}\bm{E}_{(i,j)} - \widehat{\bSigma}\bm{E}_{(i,j)}\right\}\right]\right|_{u_{\bGamma,ij} = 0^+, u_{\bGamma,lm}=0, \forall (l,m)\neq (i,j)=0} &\leq& \frac{2\lambda}{n}, \\
\left.\tr\left[\left\{(\bDelta_{\bGamma} + \bOmega^*)^{-1}\bm{E}_{(i,j)} - \widehat{\bSigma}\bm{E}_{(i,j)}\right\}\right]\right|_{u_{\bGamma,ij} = 0^-, u_{\bGamma,lm}=0, \forall (l,m)\neq (i,j) = 0} &\geq& - \frac{2\lambda}{n}.
\end{eqnarray*}
If the first derivative of $f(\bDelta_{\bGamma})$ is continuous at $0$, then, we have $\widehat{u}_{\bGamma,ij} = 0$.
This immediately implies the result stated in the lemma.
\end{proof}

We now prove the result on the bound of the remainder term in the Taylor series expansion of the function $h(\bOmega)$. The proof requires other additional results which are also stated and proved below.
\begin{proof}[Proof of Lemma \ref{lemma:Rnbound}]
The Taylor series expansion of $h(\bOmega)$ gives,
\begin{equation}
\label{eqn:TswithRem}
h(\bOmega) = h(\bOmega^*) + \frac{1}{2}\mathrm{vec}(\bDelta)^T \bm{H}_{\bOmega^*}\mathrm{vec}(\bDelta) + R_n,
\end{equation}
where $R_n$ is the remainder term in the expansion. Using the integral form of the remainder, we have,
\begin{equation}
\label{eqn:TswithIntform}
h(\bOmega) = h(\bOmega^*) + \mathrm{vec}(\bDelta)^T \left\{\int_{0}^{1}(1-\nu)\bm{H}_{\bOmega^* + \nu \bDelta}d\nu\right\}\mathrm{vec}(\bDelta).
\end{equation}
Subtracting (\ref{eqn:TswithIntform}) from (\ref{eqn:TswithRem}) gives,
\begin{eqnarray}
R_n &=& \mathrm{vec}(\bDelta)^T \left\{\int_{0}^{1}(1-\nu)\bm{H}_{\bOmega^* + \nu \bDelta}d\nu\right\}\mathrm{vec}(\bDelta) - \frac{1}{2}\mathrm{vec}(\bDelta)^T \bm{H}_{\bOmega^*}\mathrm{vec}(\bDelta) \nonumber \\
&=& \mathrm{vec}(\bDelta)^T \left\{\int_{0}^{1}(1-\nu)\left(\bm{H}_{\bOmega^* + \nu \bDelta} - \bm{H}_{\bOmega^*}\right)d\nu \right\}\mathrm{vec}(\bDelta) \nonumber \\
&\leq & \|\bDelta\|_2^2 \left\|\int_{0}^{1}(1-\nu)\left(\bm{H}_{\bOmega^* + \nu \bDelta} - \bm{H}_{\bOmega^*}\right)d\nu \right\|_{(2,2)} \nonumber \\
&\leq & \|\bDelta\|_2^2 \int_{0}^{1}(1-\nu)\|\bm{H}_{\bOmega^* + \nu \bDelta} - \bm{H}_{\bOmega^*}\|_{(2,2)}d\nu \nonumber \\
& \leq & \frac{1}{2}\|\bDelta\|_2^2 \mathop{\max }_{0\leq \nu \leq 1}\|\bm{H}_{\bOmega^* + \nu \bDelta} - \bm{H}_{\bOmega^*}\|_{(2,2)} \nonumber \\
& \leq & \frac{1}{2}\|\bDelta\|_2^2 (p+s)\mathop{\max }_{0\leq \nu \leq 1}\|\bm{H}_{\bOmega^* + \nu \bDelta} - \bm{H}_{\bOmega^*}\|_{\infty} \label{boundforrem}.
\end{eqnarray}
The above bound involves the maximum of the absolute differences between the elements of the Hessian matrices $\bm{H}$ computed at two different values $\bOmega^* + \nu\bDelta$ and $\bOmega^*$. We first show that, with probability tending to one,
 \begin{equation}
 \label{eqn:omegaelmntdiff}
\|(\bOmega^* + \nu\bDelta)^{-1} - \bOmega^{*-1}\|_{\infty} \leq K\|\bDelta\|_2
\end{equation}
Using the matrix norm relations in (\ref{eqn:matrixnorm}), we get,
\begin{eqnarray}
\|(\bOmega^* + \nu\bDelta)^{-1} - \bOmega^{*-1}\|_{(2,2)} &=& \|\left((\bmI + \nu\bOmega^{*-1}\bDelta)^{-1} - \bmI\right)\bOmega^{*-1}\|_{(2,2)} \nonumber \\
&=& \|\nu\bOmega^{*-1}\bDelta(\bmI + \nu\bOmega^{*-1}\bDelta)^{-1}\bOmega^{*-1}\|_{(2,2)} \nonumber \\
&\leq & \nu \|\bOmega^{*-1}\|^2_{(2,2)}\|\bDelta\|_{(2,2)} \nonumber \\
&\leq & \|\bOmega^{*-1}\|^2_{(2,2)}\|\bDelta\|_2 \leq K\|\bDelta\|_2,
\end{eqnarray}
with probability tending to 1, using (\ref{eqn:glassoeigbound}).
Thus, noting that $\|(\bOmega^* + \nu\bDelta)^{-1} - \bOmega^{*-1}\|_{\infty} \leq \|(\bOmega^* + \nu\bDelta)^{-1} - \bOmega^{*-1}\|_{(2,2)}$, we prove the result in equation (\ref{eqn:omegaelmntdiff}). 

For any symmetric matrix $\bmA$ of order $d$, we note that $\mathrm{tr}\left\{\bmA\bm{E}_{(i,j)} \bmA\bm{E}_{(l,m)}\right\}$ has the form $a_1a_2 + a_3a_4 + a_5a_6 + a_7a_8$ where $i,j,l,m \in \{1,\ldots,d\}$, and $a_j$s are some elements of $\bmA$. This can be derived easily by writing out the elements of the product of the matrices involved and noting that matrices like $\bm{E}_{(i,j)}$ have non-zero entries at only two places corresponding to $(i,j)$. Hence the elements of $\bm{H}_{\bOmega^* + \nu \bDelta} - \bm{H}_{\bOmega^*}$ have the form $(a_1a_2 + a_3a_4 + a_5a_6 + a_7a_8) - (b_1b_2 + b_3b_4 + b_5b_6 + b_7b_8)$, where $a_j$'s and $b_j$'s are some elements of $(\bOmega^* + \nu\bDelta)^{-1}$ and $\bOmega^{*-1}$ respectively. Then, using equation (\ref{eqn:omegaelmntdiff}), we get, with probability tending to one,
\begin{equation}
\label{eqn:Hesselementdiff}
\sum a_1a_2 - \sum b_1b_2 \leq C_1\|\bDelta\|_2\|\bOmega^{*-1}\|_{\infty} + C_2 \|\bDelta\|_2^2.
\end{equation}
Since this holds true for any arbitrary element of $\bm{H}_{\bOmega^* + \nu \bDelta} - \bm{H}_{\bOmega^*}$, using (\ref{eqn:glassoeigbound}) and (\ref{eqn:Hesselementdiff}), we get that with probability tending to one,
\begin{equation}
\label{eqn:maxdiffHess}
\|\bm{H}_{\bOmega^* + \nu \bDelta} - \bm{H}_{\bOmega^*}\|_{\infty} \leq C_1\|\bDelta\|_2 + C_2 \|\bDelta\|_2^2,
\end{equation}
where $C_1$ and $C_2$ are suitable constants.

Using (\ref{boundforrem}) and (\ref{eqn:maxdiffHess}), with probability tending to one, we have,
\begin{equation*}
R_n \leq \frac{1}{2}(p+s)\|\bDelta\|_2^2\left(C_1\|\bDelta\|_2 + C_2 \|\bDelta\|_2^2\right).
\end{equation*}
\end{proof}

We now prove the result on the error in Laplace approximation of the posterior probabilities of graphical model structures.
\begin{proof}[Proof of Theorem \ref{theorem:appxerror}]
Using the Taylor series expansion of $h(\bOmega)$ as in (\ref{eqn:TswithRem}), we can write the posterior probability of the graphical structure indicator $\bGamma$ given the data $\bX^{(n)}$ as in equation (\ref{eqn:postprob}) to be proportional to
\begin{equation}
\int_{\bDelta + \bOmega^* \in \mathcal{M}^+} \exp\left\{-\frac{n}{2}\left(h(\bOmega^*) + \frac{1}{2}\mathrm{vec}(\bDelta)^T \bm{H}_{\bOmega^*}\mathrm{vec}(\bDelta) + R_n\right)\right\} \prod_{(i,j) \in \mathcal{V}_{\bGamma}}du_{ij}.
\end{equation}
We denote $\prod_{(i,j) \in \mathcal{V}_{\bGamma}}du_{ij}$ by $d\bDelta$ for notational simplicity. Using (\ref{eqn:postratio}), we get
\begin{equation}
\frac{\int_{\|\bDelta\|_2 \leq \epsilon_n} \exp\left[-\frac{n}{2}\left\{h(\bOmega^*) + \frac{1}{2}\mathrm{vec}(\bDelta)^T \bm{H}_{\bOmega^*}\mathrm{vec}(\bDelta) + R_n\right\}\right] d\bDelta}{\int_{\bDelta + \bOmega^* \in \mathcal{M}^+} \exp\left[-\frac{n}{2}\left\{h(\bOmega^*) + \frac{1}{2}\mathrm{vec}(\bDelta)^T \bm{H}_{\bOmega^*}\mathrm{vec}(\bDelta) + R_n\right\}\right] d\bDelta} \rightarrow 1.
\end{equation}
Also, for $\|\bDelta\|_2 \leq \epsilon_n$, $R_n \leq (p+s)\|\bDelta\|_2^2\epsilon_n/2$. Thus, the upper and lower bounds of the integral $\int_{\|\bDelta\|_2 \leq \epsilon_n}\exp\left\{-\frac{n}{2}h(\bOmega)\right\}d\bDelta$ are given by
\begin{eqnarray}
\label{eqn:intbounds}
\lefteqn{
e^{-nh(\bOmega^*)/2}\int_{\|\bDelta\|_2 \leq \epsilon_n } \exp\left\{-\frac{n}{2}\left(\frac{1}{2}\mathrm{vec}(\bDelta)^T \bm{H}_{\bOmega^*}\mathrm{vec}(\bDelta) \mp \frac{1}{2}(p+s)\epsilon_n\|\bDelta\|_2^2\right)\right\} d\bDelta.} \nonumber \\
&=& e^{-nh(\bOmega^*)/2}\int_{\|\bDelta\|_2 \leq \epsilon_n } \exp\left[-\frac{n}{4}\mathrm{vec}(\bDelta)^T \left\{\bm{H}_{\bOmega^*} \mp (p+s)\epsilon_n\bmI\right\}\mathrm{vec}(\bDelta)\right] d\bDelta. \nonumber \\
\end{eqnarray}
Note that,
\begin{equation}
\int_{\|\bDelta\|_2 > \epsilon_n } \exp\left[-\frac{n}{4}\mathrm{vec}(\bDelta)^T \left\{\bm{H}_{\bOmega^*} \mp (p+s)\epsilon_n\bmI\right\}\mathrm{vec}(\bDelta)\right] d\bDelta \rightarrow 0,
\end{equation}
if $(p+s)\epsilon_n \rightarrow 0$ and the minimum eigenvalue of $\bm{H}_{\bOmega^*}$ is bounded away from zero, which we prove in Lemma \ref{lemma:Hesseigbound} below.
Hence, the bounds can be simplified to 
\begin{equation}
e^{-nh(\bOmega^*)/2}\int_{\bDelta + \bOmega^* \in \mathcal{M}^+} \exp\left[-\frac{n}{4}\mathrm{vec}(\bDelta)^T \left\{\bm{H}_{\bOmega^*} \mp (p+s)\epsilon_n\bmI\right\}\mathrm{vec}(\bDelta)\right] d\bDelta.
\end{equation}
Using the above bounds, the ratio of the actual integral to the approximate integral has upper and lower bounds given by
\begin{eqnarray}
&&\frac{\int_{\bDelta + \bOmega^* \in \mathcal{M}^+} \exp\left[-\frac{n}{4}\mathrm{vec}(\bDelta)^T \left\{\bm{H}_{\bOmega^*} \mp (p+s)\epsilon_n\bmI\right\}\mathrm{vec}(\bDelta)\right] d\bDelta}{\int_{\bDelta + \bOmega^* \in \mathcal{M}^+} \exp\left\{-\frac{n}{4}\mathrm{vec}(\bDelta)^T \bm{H}_{\bOmega^*}\mathrm{vec}(\bDelta)\right\} d\bDelta} \nonumber \\
& = & \left[\frac{\det\left\{\bm{H}_{\bOmega^*} \pm (p+s)\epsilon_n\bmI_p\right\}}{\det(\bm{H}_{\bOmega^*})}\right]^{-1/2}.
\end{eqnarray}
The above expression is bounded between $\left[1 \mp \{\eig_1(\bm{H}_{\bOmega^*})\}^{-1}(p+s)\epsilon_n \right]^{-(p+s)/2}$. Using Lemma \ref{lemma:Hesseigbound} below, $\eig_1( \bm{H}_{\bOmega^*}) \gg 0$, and hence the above bound on the ratio goes to 1 if $(p+s)^2\epsilon_n \rightarrow 0$, so that the error in Laplace approximation is asymptotically small.

\end{proof}

We now prove the result that the eigenvalues of the Hessian $\bm{H}_{\bOmega^*}$ are bounded away from zero.
\begin{lemma}
\label{lemma:Hesseigbound}
Given a graphical model with model indicator $\bGamma$, the minimum eigenvalue of the Hessian $\bm{H}_{\bOmega^*}$ corresponding to the function $h(\bOmega)$, evaluated at $\bOmega^*$, is bounded away from zero.
\end{lemma}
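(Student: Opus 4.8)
The plan is to bound the quadratic form $\bm{v}^{T}\bm{H}_{\bOmega^{*}}\bm{v}$ from below by a fixed multiple of $\|\bm{v}\|_{2}^{2}$ for every $\bm{v}\in\RR^{\#\mathcal{V}_{\bGamma}}$, where the multiple depends only on $\|\bOmega^{*}\|_{(2,2)}$, which is $O_{P}(1)$ by \eqref{eqn:glassoeigbound}. This gives a deterministic lower bound on $\eig_{1}(\bm{H}_{\bOmega^{*}})$ in terms of $\|\bOmega^{*}\|_{(2,2)}$, and the conclusion follows.

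First I would pass from the vector $\bm{v}=(v_{ij}:(i,j)\in\mathcal{V}_{\bGamma})$ to the symmetric matrix $\bm{V}=\sum_{(i,j)\in\mathcal{V}_{\bGamma}}v_{ij}\bm{E}_{(i,j)}$. By the bilinearity of the entries of $\bm{H}_{\bOmega^{*}}$ in the matrices $\bm{E}_{(i,j)}$ and the cyclic invariance of the trace,
\begin{equation*}
\bm{v}^{T}\bm{H}_{\bOmega^{*}}\bm{v}=\tr\{\bOmega^{*-1}\bm{V}\bOmega^{*-1}\bm{V}\}=\tr\{(\bOmega^{*-1/2}\bm{V}\bOmega^{*-1/2})^{2}\}=\|\bOmega^{*-1/2}\bm{V}\bOmega^{*-1/2}\|_{2}^{2},
\end{equation*}
the last equality because $\bOmega^{*-1/2}\bm{V}\bOmega^{*-1/2}$ is symmetric; in particular this already shows $\bm{H}_{\bOmega^{*}}\geq\bm{0}$.

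Next, writing $\bm{V}=\bOmega^{*1/2}(\bOmega^{*-1/2}\bm{V}\bOmega^{*-1/2})\bOmega^{*1/2}$ and applying the submultiplicative matrix-norm inequalities in \eqref{eqn:matrixnorm} twice gives $\|\bm{V}\|_{2}\leq\|\bOmega^{*1/2}\|_{(2,2)}^{2}\,\|\bOmega^{*-1/2}\bm{V}\bOmega^{*-1/2}\|_{2}=\|\bOmega^{*}\|_{(2,2)}\,\|\bOmega^{*-1/2}\bm{V}\bOmega^{*-1/2}\|_{2}$, hence $\bm{v}^{T}\bm{H}_{\bOmega^{*}}\bm{v}\geq\|\bOmega^{*}\|_{(2,2)}^{-2}\|\bm{V}\|_{2}^{2}$. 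Finally, since the matrices $\bm{E}_{(i,j)}$, $(i,j)\in\mathcal{V}_{\bGamma}$, have pairwise disjoint supports and each has Frobenius norm at least one, $\|\bm{V}\|_{2}^{2}=\sum_{i}v_{ii}^{2}+2\sum_{i<j}v_{ij}^{2}\geq\|\bm{v}\|_{2}^{2}$. Combining the displays yields $\eig_{1}(\bm{H}_{\bOmega^{*}})\geq\|\bOmega^{*}\|_{(2,2)}^{-2}$, and by \eqref{eqn:glassoeigbound} the right-hand side is bounded away from zero with probability tending to one, as claimed.

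The computation is elementary; the only point requiring genuine care is the trace identity for the quadratic form, which rests on the explicit structure of the products $\bm{E}_{(i,j)}\bm{A}\bm{E}_{(l,m)}$ (see Section~15.9 of \cite{harville2008matrix}) together with cyclicity of the trace, and on the bookkeeping that the disjoint supports of the $\bm{E}_{(i,j)}$ make $\|\bm{V}\|_{2}$ dominate $\|\bm{v}\|_{2}$. There is no real analytic obstacle here, and the bound is uniform over the model indicator $\bGamma$.
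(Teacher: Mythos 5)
Your proof is correct, and it reaches the paper's bound $\eig_1(\bm{H}_{\bOmega^*})\ge\|\bOmega^*\|_{(2,2)}^{-2}$ by a genuinely different route. The paper argues structurally: it identifies the full-model Hessian with $\bOmega^{*-1}\otimes\bOmega^{*-1}$, observes that $\bm{H}_{\bOmega^*}$ is a principal submatrix of it, invokes the interlacing property that a principal submatrix cannot have a smaller minimum eigenvalue, and then uses $\eig_1(\bm{A}\otimes\bm{A})=\{\eig_1(\bm{A})\}^2$. You instead bound the quadratic form directly: the identity $\bm{v}^T\bm{H}_{\bOmega^*}\bm{v}=\|\bOmega^{*-1/2}\bm{V}\bOmega^{*-1/2}\|_2^2$ together with submultiplicativity and the disjoint-support observation $\|\bm{V}\|_2^2\ge\|\bm{v}\|_2^2$ does all the work. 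Your version is more self-contained and, in one respect, more careful: the parametrization by the free elements $\{u_{ij}:(i,j)\in\mathcal{V}_{\bGamma}\}$ via the matrices $\bm{E}_{(i,j)}$ (which carry two unit entries off the diagonal) means the full-model Hessian is really $\bm{D}^T(\bOmega^{*-1}\otimes\bOmega^{*-1})\bm{D}$ for a duplication-type matrix $\bm{D}$, not literally $\bOmega^{*-1}\otimes\bOmega^{*-1}$; the paper glosses over this, and its principal-submatrix claim needs the extra (true but unstated) fact that $\bm{D}^T\bm{D}\ge\bm{I}$. Your direct computation absorbs exactly this bookkeeping through the inequality $\|\bm{V}\|_2^2=\sum_i v_{ii}^2+2\sum_{i<j}v_{ij}^2\ge\|\bm{v}\|_2^2$, and it makes explicit that the bound is uniform in $\bGamma$. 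What the paper's approach buys in exchange is brevity and a reusable structural fact (eigenvalue interlacing for principal submatrices) at the cost of the vectorization subtlety.
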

\begin{proof}
The Hessian of the function $h(\bOmega)$ corresponding to the full model with $p + \binom{p}{2}$ free elements has the form $\bm{H}_{\bOmega,\mathrm{full}} =  \bOmega^{-1} \otimes \bOmega^{-1}$. The Hessian $\bm{H}_{\bOmega^*}$ evaluated at the graphical lasso solution $\bOmega^*$ corresponding to the graphical model with model indicator $\bGamma$ is a principal minor of $\bm{H}_{\bOmega^*,\mathrm{full}}$. Hence it suffices to prove that the minimum eigenvalue of $ (\bOmega^*)^{-1} \otimes (\bOmega^*)^{-1}$ is bounded away from zero. Note that,
$\eig_1\{(\bOmega^*)^{-1} \otimes (\bOmega^*)^{-1}\} = [\eig_1\{(\bOmega^*)^{-1}\}]^2$. Thus, using (\ref{eqn:glassoeigbound}), $[\eig_1\{(\bOmega^*)^{-1}\}]^2 = 1/\|\bOmega^*\|^2_2 > 0$.
\end{proof}

\bibliographystyle{apalike}
\bibliography{bayesiangraphlassobib}

\end{document}